\newtheorem{theorem}{Theorem}[section]
\newtheorem{proposition}[theorem]{Proposition}
\newtheorem{lemma}[theorem]{Lemma}
\newtheorem{corollary}[theorem]{Corollary}
\newtheorem{problem}{Problem}
\newtheorem{remark}[theorem]{Remark}
\theoremstyle{definition}
\newtheorem{definition}[theorem]{Definition}
\newcolumntype{L}[1]{>{\raggedright\let\newline\\\arraybackslash\hspace{0pt}}m{#1}}
\newcolumntype{C}[1]{>{\centering\let\newline\\\arraybackslash\hspace{0pt}}m{#1}}
\newcolumntype{R}[1]{>{\raggedleft\let\newline\\\arraybackslash\hspace{0pt}}m{#1}}
\definecolor{cfefefe}{RGB}{254,254,254}
\definecolor{c0b0b0b}{RGB}{11,11,11}
\definecolor{cc7c6c3}{RGB}{199,198,195}
\definecolor{cdfdedf}{RGB}{223,222,223}
\definecolor{c0a0a09}{RGB}{10,10,9}
\definecolor{c0a0a0a}{RGB}{10,10,10}
\definecolor{c080808}{RGB}{8,8,8}
\definecolor{cbfbcb6}{RGB}{191,188,182}
\definecolor{cc1beb7}{RGB}{193,190,183}
\definecolor{cb5b4b1}{RGB}{181,180,177}
\definecolor{cd8d8d8}{RGB}{216,216,216}
\definecolor{cb8b7b3}{RGB}{184,183,179}
\definecolor{cc4c2c0}{RGB}{196,194,192}
\definecolor{c1b1b1a}{RGB}{27,27,26}
\definecolor{c9e9991}{RGB}{158,153,145}
\definecolor{cbdbab4}{RGB}{189,186,180}
\definecolor{cb9b7b4}{RGB}{185,183,180}
\definecolor{cb9b8b5}{RGB}{185,184,181}
\definecolor{c1a1a19}{RGB}{26,26,25}
\definecolor{ccfcecb}{RGB}{207,206,203}
\definecolor{cb3aea7}{RGB}{179,174,167}
\definecolor{cc9c7c1}{RGB}{201,199,193}
\definecolor{c6b6a68}{RGB}{107,106,104}
\definecolor{cd3d2cf}{RGB}{211,210,207}
\definecolor{c8b8b8b}{RGB}{139,139,139}
\definecolor{ca6a5a2}{RGB}{166,165,162}
\definecolor{caaa9a6}{RGB}{170,169,166}
\definecolor{c6e6c67}{RGB}{110,108,103}
\definecolor{ca5a4a1}{RGB}{165,164,161}
\definecolor{c9d9c9a}{RGB}{157,156,154}
\definecolor{cb8b7b4}{RGB}{184,183,180}
\definecolor{c1c1c1b}{RGB}{28,28,27}
\definecolor{ca39e96}{RGB}{163,158,150}
\definecolor{ca8a6a0}{RGB}{168,166,160}
\definecolor{cbcbbb8}{RGB}{188,187,184}
\definecolor{c505050}{RGB}{80,80,80}
\definecolor{c9f9b93}{RGB}{159,155,147}
\definecolor{cb4b3b0}{RGB}{180,179,176}
\definecolor{c2c2c2b}{RGB}{44,44,43}
\definecolor{ce2e2e3}{RGB}{226,226,227}
\definecolor{ccbc9c3}{RGB}{203,201,195}
\title[Threshold Graphs are Globally Synchronizing]{Global synchronization beyond dense graphs: the case of threshold graphs}
\author[Wu]{Hongjin Wu}
\address{Centrum Wiskunde \& Informatica, Netherlands}
\email{hongjin-wu@outlook.com}
\author[Brandes]{Ulrik Brandes}
\address{ETH Z\"urich, Switzerland}
\email{ubrandes@ethz.ch}
\date{\today}
\begin{document}

\begin{abstract}
Given a graph \(G\) with adjacency matrix \(A\), consider the homogeneous Kuramoto energy $E_G(\boldsymbol{\theta})
:=
\frac{1}{2}
\sum_{1\leq i,j\leq n}
A_{ij}\bigl(1-\cos(\theta_i-\theta_j)\bigr)$.
We call \(G\) \emph{second-order globally synchronizing} if every second-order stationary point of \(E_G\) is fully synchronized. This property implies \emph{global synchronization}, namely that, up to a measure-zero set of initial conditions, trajectories of the Kuramoto model converge to a fully synchronized state.
A fundamental graph-theoretic question is to identify which graph structures have this property. Existing guarantees for global synchronization typically require large minimum degree which forces the graph to be very dense, or good expansion properties. In this paper, we show that synchronization can also arise from a different, purely structural mechanism. More precisely, we prove that threshold graphs, a classical recursively defined graph class, are second-order globally synchronizing, and hence globally synchronizing. Thus, globally synchronizing graphs need not be very dense, have large minimum degree, or satisfy strong expansion-type conditions. The proof exploits the recursive construction of threshold graphs: local phasor constraints imposed by second-order stationarity are propagated along the construction sequence until full synchronization is forced.
\end{abstract}

\maketitle
\tableofcontents
\section{Introduction}\label{sec:intro}
Understanding how graph structure influences synchronization phenomena is a central theme at the interface of graph theory, network dynamics, and nonlinear systems. Synchronization itself is a classical phenomenon: its study dates back to the 17th century, when Christiaan Huygens observed the spontaneous synchronization of pendulum clocks. Since then, synchronization phenomena have been widely studied in natural and technological systems, including the flashing of fireflies \cite{Buck1968, Strogatz1993}, synchronization in power grids \cite{dorfler2012synchronization}, and more recent connections with machine-learning models \cite{Miyato2024ArtificialKuramoto,Geshkovski2025,Criscitiello2024}.

Among mathematical models of synchronization, the Kuramoto model has become one of the standard frameworks because it combines analytical tractability with rich nonlinear behavior. Introduced by Yoshiki Kuramoto in 1975 \cite{Kuramoto1975}, the model describes \(n\) oscillators on the unit circle, coupled according to a graph \(G=(V,E)\) with adjacency matrix \(\boldsymbol{A}\). It is given by the system of ordinary differential equations
\[
 \frac{d\theta_i}{dt}
 =
 \omega_i
 +
 \sum_{j=1}^{n} \boldsymbol{A}_{ij}\sin(\theta_j-\theta_i),
 \qquad i\in V,
\]
where \(\theta_i:\mathbb{R}\to\mathbb{S}^1\) denotes the phase of oscillator \(i\), and \(\omega_i\) is its intrinsic frequency. Thus, each oscillator evolves according to its own intrinsic frequency together with nonlinear interactions determined by phase differences with its neighbors.

In the general heterogeneous setting, where the intrinsic frequencies \(\omega_i\) may differ, a central question is whether the oscillators achieve frequency synchronization. In contrast, the discrete structure of the coupling graph becomes especially prominent in the homogeneous setting, where all intrinsic frequencies are identical, i.e.,
\(\omega_i=\omega\) for all \(i\in\{1,\ldots,n\}\).
After passing to a co-rotating frame, one may assume \(\omega_i=0\) for all \(i\), and the dynamics reduce to
\begin{equation}\label{kuramoto}
 \frac{d\theta_i}{dt}
 =
 \sum_{j=1}^{n} \boldsymbol{A}_{ij}\sin(\theta_j-\theta_i),
 \qquad i\in V.
\end{equation}
In this setting, frequency synchronization is automatic, and the remaining question is phase synchronization: whether all phases converge to a common value. Thus, the problem becomes genuinely graph-theoretic: Which graph structures force synchronization from almost all initial conditions?
\medskip

Now we begin with a precise definition of globally synchronizing graphs.

\begin{definition}[Globally synchronizing graph]\label{defi-globalsync}
We say that a graph \(G=(V,E)\) is \emph{globally synchronizing} if,
for all initial conditions \(\boldsymbol{\theta}(0)\in\mathbb{R}^n\),
except for a set of measure zero,
the solution to the Kuramoto model~\eqref{kuramoto} on $G$ satisfies
\[
\lim_{t\to\infty} \bigl(\theta_i(t)-\theta_j(t)\bigr)=0
\pmod{2\pi}
\qquad \text{for all } i,j\in V.
\]
\end{definition}

To build some intuition for the model~\eqref{kuramoto} and the
definition~\eqref{defi-globalsync}, one may imagine placing $n$ oscillators on
the unit circle and connecting them by edges specified by the adjacency matrix
$A$. If there is an edge between nodes $i$ and $j$, then whenever their phases
differ, the interaction tends to reduce this difference and pull them toward a
common phase.

\begin{figure}[H]
\centering
\begin{tikzpicture}[scale=2]
\tikzset{
  vertex/.style={circle, draw=black, fill=black, minimum size=8pt, inner sep=0pt},
  elabel/.style={midway, sloped, draw=none, fill=none, text=black, font=\small}
}

\draw[thick] (0,0) circle (1);

\node[vertex, label=above:{$1$}] (n2) at (90:1)  {};
\node[vertex, label=right:{$2$}] (n3) at (330:1) {};
\node[vertex, label=left:{$3$}]  (n6) at (190:1) {};

\draw[decorate, decoration={coil,aspect=0.4,segment length=3pt,amplitude=2pt}]
  (n2) -- node[elabel, above] {$A_{12}$} (n3);

\draw[decorate, decoration={coil,aspect=0.4,segment length=3pt,amplitude=2pt}]
  (n2) -- node[elabel, above] {$A_{13}$} (n6);

\draw[decorate, decoration={coil,aspect=0.4,segment length=3pt,amplitude=2pt}]
  (n3) -- node[elabel, below] {$A_{23}$} (n6);
\draw[->, thick, black] (95:1.05) arc (95:120:1.05) node[midway, above, text=black, font=\small] {$3$};
\draw[->, thick, black] (185:1.05) arc (185:160:1.05) node[midway, left, text=black, font=\small] {$1$};
\draw[->, thick, black] (85:1.05) arc (85:60:1.05) node[midway, above, text=black, font=\small] {$2$} ;
\draw[->, thick, black] (335:1.05) arc (335:360:1.05) node[midway, right, text=black, font=\small] {$1$};
\draw[->, thick, black] (195:1.05) arc (195:220:1.05) node[midway, left, text=black, font=\small] {$2$};
\draw[->, thick, black] (325:1.05) arc (325:300:1.05) node[midway, right, text=black, font=\small] {$3$};
\end{tikzpicture}
\caption{A spring analogy for a coupled oscillator network. The label on each curved arrow marks which node exerts the force.}
\label{oscillators}
\end{figure}
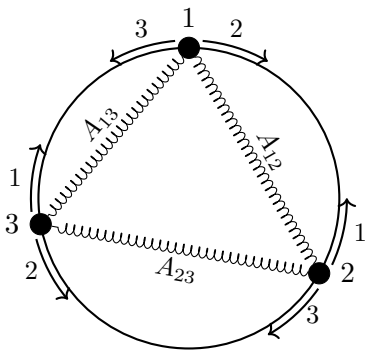
From the above illustration of the complete graph $K_3$, one may ask whether
the three oscillators will eventually converge to a common position when placed
as shown in Figure \ref{oscillators}.
Each node is subject to two forces, for instance, node $1$ is pulled
toward node $2$ on one side and toward node $3$ on the other.
So it is not
immediately clear in which direction it will move, let alone whether all three
nodes will ultimately coincide.
Even in this simple example, the difficulty of the problem becomes apparent.
Nevertheless, one may form an intuitive if
somewhat heuristic expectation: since in a complete graph each oscillator tends
to move closer to every other oscillator, the abundance of interactions may
indeed drive the system to synchronize from almost all initial configurations.

One standard approach to this problem is to study the corresponding energy landscape.
Precisely, the Kuramoto model \eqref{kuramoto} defines a \textit{gradient dynamical system} (or called \textit{gradient flow}\footnote{We call a dynamical system a \emph{gradient flow} on $E(x)$ if it has the form $\frac{dx(t)}{dt} = -\nabla E(x)$. Its trajectories evolve along the steepest descent direction of $E$.}) with energy function \( E_G (\boldsymbol{\theta})\), satisfying
$\frac{d\boldsymbol{\theta}}{dt} = -\nabla E_G(\boldsymbol{\theta})$ where \( E_G(\boldsymbol{\theta}) : \mathbb R^n \rightarrow \mathbb{R} \) with graph \( G=(V,E) \) is defined as
\begin{equation}\label{E}
\begin{aligned}
    E_G(\boldsymbol\theta) := &\frac{1}{2} \sum_{1\leq i,j\leq n} \boldsymbol{A}_{ij} \big(1-\cos(\theta_i - \theta_j)\big). \\
    \end{aligned}
\end{equation}
Observe that the energy function \eqref{E} is nonconvex in general. The fully synchronized states are precisely its global minimizers. However, nonconvexity allows the possibility of nonsynchronized equilibria satisfying second-order necessary conditions for local optimality. Such points constitute a potential obstruction to global synchronization: if they correspond to stable equilibria of the gradient dynamics, then trajectories initialized in their basin of attraction may converge to nonsynchronized states.

A sufficient way to rule out this obstruction is to exclude nonsynchronized second-order stationary points. More precisely, suppose that every state satisfying the first and second-order stationary condition is a fully synchronized state, i.e., $\theta_1=\cdots=\theta_n$. Then \(G\) is globally synchronizing. This can be shown by the Łojasiewicz gradient inequality \cite{Lageman2007} together with the center manifold theorem; see Lemma A.1 in \cite{Geshkovski2025} for an excellent exposition. Therefore, to prove that a graph is globally synchronizing, it is enough to show that every second-order stationary point of \(E_G\) is synchronized.
This motivates the following graph property.

\begin{definition}[Second-order globally synchronizing graph]\label{defi-second-order-gs}
A graph \(G=(V,E)\) is called \emph{second-order globally synchronizing} if every
configuration \(\boldsymbol{\theta}\) satisfying
\[
    \nabla E_G(\boldsymbol{\theta})=0,
    \qquad
    \nabla^2 E_G(\boldsymbol{\theta})\succeq 0
\]
is a fully synchronized state.
\end{definition}
As discussed above, every second-order globally synchronizing graph is globally
synchronizing. In this paper, we therefore focus on the stronger problem of
characterizing second-order globally synchronizing graphs. This leads to the
central graph-theoretic question studied in this paper:
\begin{center}
\textbf{Key question.}
\emph{Which graphs are second-order globally synchronizing?}
\end{center}
Our main result answers this question for threshold graphs.

\subsection{Prior Work and Our Contributions} 
A dominant line of research focuses on the so-called \textit{extremal combinatorics} version of the problem.
It starts from the fact that complete graphs are globally synchronizing and asks whether sufficiently dense subgraphs of the complete graph still synchronize globally. 
\medskip

\begin{problem}
	Let $\delta(G)$ denote the minimum degree of $G$. Find the smallest constant $\mu_c\in[0,1]$ such that as long as $\delta(G)\ \ge\ \mu_c (n-1)$, the graph is globally synchronizing, whereas below this threshold, there exist graphs satisfying the corresponding minimum-degree bound that fail to be globally synchronizing.
\end{problem}
\medskip

Several advances have been made in \cite{taylor2012there, ling2019landscape, lu2021synchronization,kassabov2021sufficiently,canale2022weighted,yoneda2021lower}.
The current best known bounds on the critical minimum-degree threshold $\mu_c$ satisfy $0.6875 \le \mu_c \le 0.75$, where the upper bound is established in \cite{kassabov2021sufficiently}, and the strongest known lower bound follows from \cite{canale2022weighted}. 
It has been conjectured that below the \(75\%\) minimum-degree threshold, even
highly connected graphs may fail to synchronize. More precisely, for every
\(\epsilon>0\), there exists \(n\) and a graph on \(n\) vertices with minimum
degree at least \((\frac{3}{4}-\epsilon)n\) that is not globally
synchronizing; see~\cite{TownsendStillmanStrogatz2020} and Conjecture~5
in~\cite{bandeira2025randomstrasse101openproblems2024}.
Note that sufficiently large threshold for minimum degree enforces high density. Indeed, when $\mu_c \ge 75\%$, the total number of edges satisfies $|E| \ge \frac{3}{8}\, n(n-1)$, which scales as $O(n^2)$.

Must globally synchronizing graphs be very dense, or have minimum degree exceeding a fixed threshold? Several known examples suggest that this is not the case.
For instance, trees are known to be globally synchronizing. An $n$-vertex tree has $n-1$ edges, and hence edge density $\frac{n-1}{\binom{n}{2}}=\frac{2}{n}$ and minimum degree is $1$.
Also, wheel graphs are globally synchronizing \cite{canale2010wheels}, and their edge density is \(4/n\), and their minimum degree is \(3\).
This suggests that, beyond the extremely dense regime, global synchronization is governed more by structural properties of the graph than by edge density alone.
However, there are still relatively few results supporting this perspective.
\begin{center}
\vspace{-0.4em}
\begin{tikzpicture}[>=stealth, xscale=0.92, yscale=0.75]
\draw[->] (0,0) -- (9,0) node[right] {density};

\def\xzero{0}
\def\xone{8.4}
\def\xthreshold{0.55} 
\def\xtree{1.0}
\def\xwheel{2.2}
\def\xdense{6.3} 

\draw (\xzero,0.08) -- (\xzero,-0.08) node[below] {$0$};
\draw (\xthreshold,0.08) -- (\xthreshold,-0.08) node[below] {$1/n$};
\draw (\xone,0.08) -- (\xone,-0.08) node[below] {$1$};
\draw (\xdense,0.08) -- (\xdense,-0.08) node[below] {$0.75$};

\fill (\xtree,0) circle (1.6pt);
\fill (\xwheel,0) circle (1.6pt);
\draw (\xtree,0) -- (\xtree,0.18)
  node[above,align=center] {trees\\[0.5mm]$2/n$};
\draw (\xwheel,0) -- (\xwheel,0.18)
  node[above,align=center] {wheels\\[0.5mm]$4/n$};

\draw (\xdense,0) -- (\xdense,0.18);
\draw (\xone,0) -- (\xone,0.18);
\draw[<->, line width=0.6pt] (\xdense,0.16) -- (\xone,0.16);
\node[above,align=center] at ({(\xdense+\xone)/2},0.2)
  {dense graphs};

\draw[<->, line width=0.6pt]
  (\xthreshold,-0.9) -- (\xone,-0.9);
\node[below,align=center] at ({(\xthreshold+\xone)/2},-1)
  {threshold graphs (established in this work)};
\end{tikzpicture}

\vspace{-0.6em}
\captionof{figure}{Some known globally synchronizing graphs ordered by density.}
\label{fig:density}
\vspace{-0.8em}
\end{center}
In this work, we add a classical graph class, namely threshold graphs, to this picture.
More precisely, we prove that every connected threshold graph is second-order globally synchronizing. This implies it is also globally synchronizing.
More strikingly, threshold graphs can realize essentially any edge density and any minimum degree (as we will show in Section~\ref{sec:threshold}).
To the best of our knowledge, this is the first globally synchronizing graph class that simultaneously spans the full spectrum from sparse to dense networks and realizes arbitrary minimum degrees.

Our main result is the following:
\begin{theorem}\label{main}
Every connected threshold graph is second-order globally synchronizing. In particular, every connected threshold graph is globally synchronizing.
\end{theorem}

The precise definition and equivalent characterizations of threshold graphs are given in Section~\ref{sec:threshold}. Here we provide a brief introduction. A threshold graph can be constructed from a single vertex by iteratively adding each new vertex in one of two ways: either as an \emph{isolated vertex}, adjacent to no existing vertex, or as a \emph{dominating vertex}, adjacent to all existing vertices. This generative process is uniquely encoded by a binary sequence of length \( n-1 \), where \texttt{0} denotes an isolated vertex and \texttt{1} a dominating one.
See Figure~\ref{fig:threshold} for examples.

Interestingly, threshold graphs interpolate between two familiar globally
synchronizing examples: star graphs at the sparse end and complete graphs at the
dense end. This was one of our initial motivations for studying this class. While
the intermediate threshold graphs are neither tree-like nor close to complete in
general, {their neighborhoods are highly nested. Intuitively, this nested
structure enforces synchronization by propagating phase alignment along the
construction sequence, thereby ruling out stable nonsynchronous configurations.}
\begin{figure}[H]
    \centering
    \includegraphics[width=\linewidth]{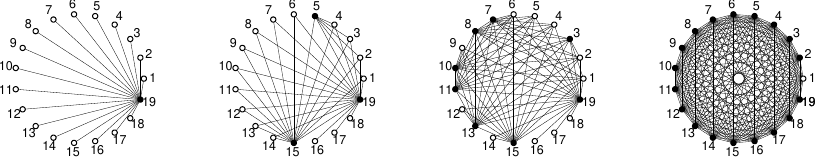}
    \captionsetup{width=\textwidth}
    \caption{Four connected threshold graphs on $19$ vertices, from the sparsest (star graph) to the densest (complete graph). Node labels indicate the vertex addition order in the construction, with vertex $1$ as the initial vertex. White and black circles denote isolated and dominating vertices (bit $0$ and bit $1$), and the initial vertex $1$ is colored in white. Codes from left to right: \texttt{000000000000000001}, \texttt{000100000000010001}, \texttt{010001101101010001}, \texttt{111111111111111111}.}
    \label{fig:threshold}
\end{figure}

To the best of our knowledge, this is the first result establishing global
synchronization for threshold graphs. We believe that this result provides new
insights into the structure of globally synchronizing graphs and introduces a new proof strategy, as summarized below.
\newline

\noindent
(1) Our result shows that global synchronization is not merely a consequence of
high edge density or large minimum degree, but is strongly influenced by the
underlying structure of the graph. Much of the existing work has focused on the
dense regime, where global synchronization is guaranteed by imposing a very large
minimum degree. While examples such as trees and wheel graphs already indicate
that density alone is not the decisive factor, these examples represent only
special density regimes, namely the very sparse regime. In contrast, threshold
graphs provide a structurally rich class of globally synchronizing graphs across
essentially the entire density range. As shown in Section~\ref{sec:threshold},
they span edge densities from \(2/n\) to \(1\), and realize all possible minimum
degrees from \(1\) to \(n-1\). Equivalently, for any prescribed edge density in
this range, there exists a globally synchronizing graph with that density. The
same holds for the minimum degree.

It is worth noting that the recent work on global synchronization of expander
graphs~\cite{abdalla2026expander} also supports the view that globally
synchronizing graphs need not be extremely dense, and that the structure of the
graph matters. This makes it natural to compare our result with expander-based
results.
At a high level, the global synchronization results for expander graphs rely on
the well-known \textit{expander mixing lemma}. Roughly speaking, it is this
mixing property that rules out stable equilibria in which the oscillators are
well spread over the circle: for any two vertex sets, the number of edges between
them is close to what one would expect if edges were distributed at random.
However, our theorem is not covered by~\cite{abdalla2026expander}, since
threshold graphs are typically poor expanders. Their nested-neighborhood
structure creates large variations in the number of edges between different
pairs of vertex sets. Consequently, for some vertex sets \(S,T\), the edge count
\(e(S,T)\) can deviate substantially from the ``expected'' value predicted by the
average degree. For example, consider the star graph, which is a connected
threshold graph. Let \(S\) consist of the centre and let \(T\) be the set of
leaves. Then
\[
e(S,T)=n-1,
\]
whereas the value predicted by the average degree \(\bar d=2(n-1)/n\) is only of constant order:
\[
\frac{\bar d}{n}|S||T|
=
\frac{2(n-1)}{n^2}\cdot (n-1)
=
\frac{2(n-1)^2}{n^2}
=O(1).
\]

\noindent
(2) Our result shows that globally synchronizing graphs can have high heterogeneous degree sequence. It is interesting to note that the minimum-degree conditions necessarily enforce a high degree of homogeneity in the degree sequence. In contrast, we show in Section~\ref{discuss} that the degree sequences of threshold graphs are extremal: among all graphs with the same number of edges, they majorize every other degree sequence. Thus threshold graphs provide a class of globally synchronizing graphs with highly heterogeneous degree distributions, standing in sharp contrast to the degree homogeneity forced by large minimum-degree conditions.
\newline

\noindent
(3) Many existing approaches to global synchronization rely on the so-called \textit{half-circle lemma}, which states that for connected graphs, if all oscillators are contained in an open half-circle, then the dynamics contract toward synchrony. The remaining task is then to show that every second-order stationary point must lie within such a half-circle. One core proof difficulty is that it is unclear how to prove that threshold graphs are globally synchronizing using the half-circle lemma. In this work, we bypass the half-circle approach by exploiting the phasor geometry of local structures in the graph through the first- and second-order stationarity conditions, and by introducing an inductive argument that runs backwards along the construction sequence of threshold graphs.
From a bird's eye view, global synchronization of threshold graphs arises from the presence of local symmetries. Some of these symmetries come from the graph structure itself: as we show in Lemma~\ref{sec:closed-twins}, closed twins must synchronize at any second-order stationary point.
There are also symmetries that do not originate purely from combinatorial structure, that is, certain configurations exhibit a form of geometric symmetry. As we will show in Section~\ref{sec:sync-pendant}, in the configurations we call synchronous pendants, pairs of vertices that are not structurally symmetric nonetheless become geometrically equivalent at second-order stationary points and hence synchronize.

\subsection{Structure of the paper}
The preliminaries on threshold graphs and the Kuramoto model are presented in Sections~\ref{sec:threshold} and~\ref{pre-kura}, respectively.
The basic geometric tools, referred to as \emph{phasor geometry} or \emph{vector geometry}, are introduced in Section~\ref{sec:geo-eq}.
The key lemmas, termed \emph{local synchronization primitives}, which underpin our main proof, are presented in Section~\ref{Primitives}.
The proof of the main result is given in Section~\ref{main_pf}.
Finally, Section~\ref{summary} concludes the paper with a discussion and open questions.

\subsection{Notation}
Throughout this work, all graphs are assumed to be connected and simple, i.e., without self-loops or weighted edges. We write $u \sim v$ to indicate that vertices $u$ and $v$ are adjacent in a graph, and $u \not\sim v$ otherwise. For a vertex $i$ and a subset $Q$ of vertices, we denote $N_Q(i) := \{ q \in Q : q \sim i \}$, the set of neighbors of $i$ that lie in $Q$. We use $\sqcup$ to denote the disjoint union of sets. For a vertex $u\in V$, let $N(u):=\{v\in V:\{u,v\}\in E\}$ denote the open neighborhood of $u$, and let $N[u]:=N(u)\cup\{u\}$ denote the closed neighborhood of $u$.
We denote by $\mathbb{S}^1 := \{ \boldsymbol x \in \mathbb{R}^2 : \|\boldsymbol x\| = 1 \}$ the unit circle. For vectors $\boldsymbol x, \boldsymbol y \in \mathbb{R}^2$, we write $\boldsymbol x \Uparrow \boldsymbol y$ to indicate that $\boldsymbol x$ and $\boldsymbol y$ are nonzero and positively collinear, i.e., $\boldsymbol x = \lambda \boldsymbol y$ for some $\lambda > 0$.
Following the convention in dynamical systems, we refer to the variable $\boldsymbol{\theta} = (\theta_1,\ldots,\theta_n)$ of the Kuramoto model~(1.2) as the \emph{state}, and call the solution $\boldsymbol{\theta}(t)$ with initial condition $\boldsymbol{\theta}(0)$ a \emph{trajectory} starting from $\boldsymbol{\theta}(0)$.
\section{Preliminaries on threshold graphs and their density}\label{sec:threshold}
Threshold graphs form a classical and well-studied graph class, introduced by Chvátal and Hammer~\cite{ChvatalHammer1977} in connection with set-packing and integer programming.
Since then, they have appeared in several areas of discrete mathematics, including graphical degree sequences~\cite{CH73}, spectral graph theory, and spanning tree enumeration~\cite{HammerKelmans1996}.
They have also found applications in resource allocation, cyclic scheduling, manpower planning, and complexity theory \cite{MahadevPeled1995}.

\subsection{Definition and Characterizations}
\begin{definition}\label{def:threshold}
A graph $G$ is called a \emph{threshold graph} if it can be constructed from
the one-vertex graph by repeatedly applying one of the following two
operations:
\begin{enumerate}
    \item[(i)] add an \emph{isolated} vertex, adjacent to no existing vertex;
    \item[(ii)] add a \emph{dominating} vertex, adjacent to all existing
    vertices.
\end{enumerate}
\end{definition}

The resulting graph on vertices $\{1,\dots,n\}$ is uniquely encoded by the
binary sequence $(b_2,\dots,b_n)$, where $b_i=0$ if vertex $i$ is added as
isolated and $b_i=1$ if it is added as dominating node.
Threshold graphs also admit several equivalent characterizations \cite{ChvatalHammer1977}.
\begin{theorem}\label{characterization}
Let $G = (V,E)$ be a graph. The following statements are equivalent:
\begin{enumerate}
    \item[(i)] $G$ is a threshold graph.
    \item[(ii)] $G$ is obtained by iteratively adding isolated or dominating vertices to the one-vertex graph.
    \item[(iii)] $G$ admits a weight function $w : V \to \mathbb{R}$ and a threshold $t \in \mathbb{R}$ such that
\[
\{u,v\} \in E
\;\Longleftrightarrow\;
w(u) + w(v) \ge t,
\qquad \forall\, u \ne v .
\]

    \item[(iv)] $G$ is $\{P_4, C_4, 2K_2\}$-free (see Fig.~\ref{fig:threshold-forbidden}).
    \item[(v)] 
    For any distinct $u,v \in V$, $
        N(u) \subseteq N[v] \quad \text{or} \quad N(v) \subseteq N[u].$
\end{enumerate}
\end{theorem}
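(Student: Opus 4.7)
The plan is to prove the cycle $(\mathrm{i})\Leftrightarrow(\mathrm{ii})\Rightarrow(\mathrm{iii})\Rightarrow(\mathrm{iv})\Rightarrow(\mathrm{vi})\Rightarrow(\mathrm{ii})$. The equivalence $(\mathrm{i})\Leftrightarrow(\mathrm{ii})$ is immediate, since Definition~\ref{def:threshold} declares a threshold graph to be precisely one obtainable from the inductive construction. For $(\mathrm{ii})\Rightarrow(\mathrm{iii})$, I label the vertices $1,\dots,n$ in construction order, set $w(i)=2^{i}$ when vertex $i$ is the initial vertex or is added as dominating and $w(i)=-2^{i}$ when it is added as isolated, and take threshold $t=1$. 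Because $|w(v)|>\sum_{i<v}|w(i)|$, the sign of $w(u)+w(v)$ for $u<v$ agrees with the sign of $w(v)$, so the threshold condition $w(u)+w(v)\ge 1$ reproduces precisely the adjacency decreed at step $v$.

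For $(\mathrm{iii})\Rightarrow(\mathrm{iv})$, a single summation argument rules out all three forbidden subgraphs at once. Each of $P_4$, $C_4$, and $2K_2$ on vertices $\{a,b,c,d\}$ admits a labeling in which $\{a,b\}$ and $\{c,d\}$ are edges while $\{a,c\}$ and $\{b,d\}$ are non-edges. The threshold constraints then give $w(a)+w(b)\ge t$ and $w(c)+w(d)\ge t$, but also $w(a)+w(c)<t$ and $w(b)+w(d)<t$; summing the two rows yields $w(a)+w(b)+w(c)+w(d)\ge 2t$ and $<2t$, a contradiction.

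For $(\mathrm{iv})\Rightarrow(\mathrm{vi})$, I argue by contrapositive. If the nested condition fails for some pair $u\ne v$, there exist $x\in N(u)\setminus N[v]$ and $y\in N(v)\setminus N[u]$; the four vertices $u,v,x,y$ are pairwise distinct (in particular $x\ne y$, else $x$ would lie in $N[v]$ and its complement simultaneously). Depending on whether $u\sim v$ and whether $x\sim y$, the induced subgraph on $\{u,v,x,y\}$ is either $C_4$ (cycle $u{-}x{-}y{-}v{-}u$), $P_4$ (path $x{-}u{-}v{-}y$ or $u{-}x{-}y{-}v$), or $2K_2$ (matching $\{ux,vy\}$). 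In every case $G$ fails to be $\{P_4,C_4,2K_2\}$-free.

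For $(\mathrm{vi})\Rightarrow(\mathrm{ii})$, I induct on $n$. The key step is a lemma: every graph satisfying $(\mathrm{vi})$ contains an isolated or a dominating vertex. Condition $(\mathrm{vi})$ makes the relation $u\preceq v\Leftrightarrow N(u)\subseteq N[v]$ a total preorder on $V$, so it admits a minimal element $v_0$. If $N(v_0)=\emptyset$, then $v_0$ is isolated. Otherwise pick any $w\in N(v_0)$: for every $u\ne v_0$, minimality of $v_0$ gives $w\in N(v_0)\subseteq N[u]$, so $w$ coincides with $u$ or is adjacent to it; hence $w$ is adjacent to every vertex different from itself, i.e., $w$ is dominating. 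Deleting the chosen vertex preserves $(\mathrm{vi})$ (nested neighborhoods remain nested after restriction), so the inductive hypothesis yields a threshold construction of the smaller graph which we extend by the corresponding step. The main obstacle I expect is the case analysis in $(\mathrm{iv})\Rightarrow(\mathrm{vi})$, where one must track edges and non-edges among $\{u,v,x,y\}$ on the nose to verify that each of the four sub-cases realizes exactly the intended forbidden induced subgraph.
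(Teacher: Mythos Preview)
The paper does not prove Theorem~\ref{characterization}; it is quoted as a classical fact with a reference to the monograph of Mahadev and Peled. There is therefore no ``paper's own proof'' to compare against.

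Your argument is correct and self-contained. Each implication goes through as written: the geometric weights $w(i)=\pm 2^{i}$ realise the construction as a threshold representation; the summation trick uniformly excludes $P_4$, $C_4$, $2K_2$; the four-way case analysis on $\{u,v,x,y\}$ produces exactly the three forbidden induced subgraphs; and the induction via an isolated-or-dominating vertex closes the cycle. One small point you gloss over: you assert that $u\preceq v \Leftrightarrow N(u)\subseteq N[v]$ is a total \emph{preorder}, but transitivity is not automatic from~(vi) and deserves a line. It does hold: if $N(u)\subseteq N[v]$ and $N(v)\subseteq N[w]$, the only vertex of $N(u)$ that could escape $N[w]$ is $v$ itself, and then $u\sim v$ forces $u\in N(v)\subseteq N[w]$, whence $w\in N(u)\subseteq N[v]$, contradicting $v\notin N[w]$. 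With that checked, your minimum element exists and the rest follows.
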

\begin{remark}
    It is interesting to note that in Section~VI of \cite{canale2022weighted}, it is suggested that a forbidden-subgraph characterization of globally synchronizing graphs may be implausible in general. Restated in graph-theoretic terms, Theorem~\ref{main} provides a complementary positive example: every graph with no induced subgraph isomorphic to \(P_4\), \(C_4\), or \(2K_2\) is globally synchronizing.
\end{remark}
\begin{figure}[H]
  \centering
  \tikzset{
    vtx/.style={circle, fill=black, inner sep=1.6pt},
    every label/.style={font=\small}
  }

  \begin{subfigure}[b]{0.28\textwidth}
    \centering
    \begin{tikzpicture}[scale=1]
      \node[vtx,label=below:$1$] (a) at (0,0) {};
      \node[vtx,label=below:$2$] (b) at (0.9,0) {};
      \node[vtx,label=below:$3$] (c) at (1.8,0) {};
      \node[vtx,label=below:$4$] (d) at (2.7,0) {};
      \draw (a)--(b)--(c)--(d);
    \end{tikzpicture}
    \caption{$P_4$}
  \end{subfigure}
  \hfill
  \begin{subfigure}[b]{0.28\textwidth}
    \centering
    \begin{tikzpicture}[scale=1]
      \node[vtx,label=below:$1$] (a) at (0,0) {};
      \node[vtx,label=below:$2$] (b) at (1.3,0) {};
      \node[vtx,label=above:$3$] (c) at (1.3,1.0) {};
      \node[vtx,label=above:$4$] (d) at (0,1.0) {};
      \draw (a)--(b)--(c)--(d)--(a);
    \end{tikzpicture}
    \caption{$C_4$}
  \end{subfigure}
  \hfill
  \begin{subfigure}[b]{0.28\textwidth}
    \centering
    \begin{tikzpicture}[scale=1]
      \node[vtx,label=below:$1$] (a) at (0,0) {};
      \node[vtx,label=below:$2$] (b) at (0.9,0) {};
      \node[vtx,label=below:$3$] (c) at (1.9,0) {};
      \node[vtx,label=below:$4$] (d) at (2.8,0) {};
      \draw (a)--(b);
      \draw (c)--(d);
    \end{tikzpicture}
    \caption{$2K_2$}
  \end{subfigure}
  \caption{Forbidden induced subgraphs for threshold graphs.}
  \label{fig:threshold-forbidden}
\end{figure}
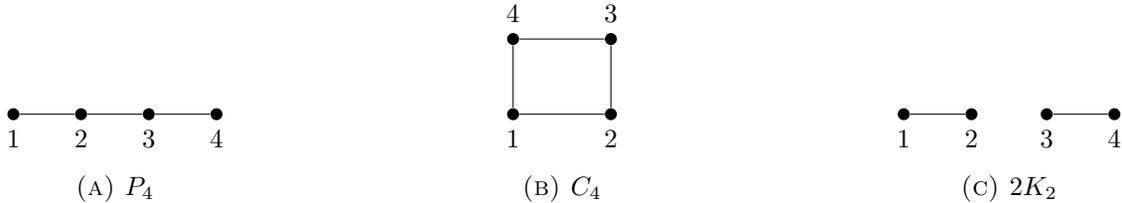

\begin{remark}
Threshold graphs appeared in the synchronization literature long before their role in the Kuramoto model was considered. In particular, as early as 1977, Ausiello, Messina, and Protasi~\cite{Ausiello1977} introduced a class of graphs (later recognized as threshold graphs) in the study of synchronization primitives in distributed computation. Their work provided a graph-theoretic characterization of so-called PV$_c$-definable graphs, which encode combinatorial conditions under which simple local update rules lead to global agreement.
\end{remark}

\subsection{Threshold Graphs Realize All Densities}\label{discuss}
The edge density of a graph $G$ on $n$ vertices takes values in
\begin{equation}\label{all}
\mathrm{density}(G)\in
\left\{\, \frac{m}{\binom{n}{2}} \;\middle|\; m \in \{0,\ldots,\binom{n}{2}\} \,\right\}.
\end{equation}
Moreover, for connected graphs, the edge density takes values corresponding to
\[
m \in \{n-1,\ldots,\binom{n}{2}\}.
\]

The following lemma shows that every admissible edge density can be realized by a
threshold graph.

\begin{lemma}
For any pair of integers $n>0$ and $0 \le m \le \binom{n}{2}$, there is a threshold
graph with $n$ vertices and $m$ edges. If $m \ge n-1$, there is a connected threshold
graph with $n$ vertices and $m$ edges.
\end{lemma}

\begin{proof}
The proof is by induction over $n$. For the base case, note that the only
graph with $n=1$ vertex has $0=m=\binom{n}{2}$ edges and is connected by definition.
For $n>1$, we consider two cases:
\begin{itemize}
\item
If $m<n-1$, the induction hypothesis implies that there is a threshold
graph with $n-1\ge 1$ vertices and $m<n-1\le \binom{n-1}{2}$ edges to which an
isolated vertex can be added to obtain a threshold graph with $n$ vertices
and $m$ edges.
\item
Otherwise, the induction hypothesis implies that there is a threshold graph
with $m-(n-1)\le \binom{n-1}{2}$ edges to which a universal vertex can be added
to obtain a connected threshold graph with $n$ vertices and $m$ edges.
\end{itemize}
\end{proof}

\subsection{Degree Sequence of Threshold Graphs}
One way to compare how spread out the degree sequences of two graphs are is through majorization.
Let $G$ and $G'$ be two graphs with the same edge density, and let
\[
\mathbf d(G) = (d_1,\dots,d_n), \qquad 
\mathbf d(G') = (d'_1,\dots,d'_n)
\]
denote their degree sequences arranged in nonincreasing order.
We say that $\mathbf d(G)$ \emph{majorizes} $\mathbf d(G')$, and write $\mathbf d(G) \succ \mathbf d(G')$, if
\[ \sum_{i=1}^k d_i \ge \sum_{i=1}^k d'_i \quad \text{for all } k=1,\dots,n-1, \]
and
\[ \sum_{i=1}^n d_i = \sum_{i=1}^n d'_i. \]
The following is a classical result on degree sequence (called a \emph{threshold sequence}) of threshold graphs.

\begin{theorem}[\cite{peled1989polytope}]\label{major}
A degree sequence $\mathbf d$ is a threshold sequence if and only if it is not strictly majorized by any other graphical degree sequence.
\end{theorem}
This theorem formalizes the intuition that threshold graphs are extremal, in the sense that their degree mass is concentrated as much as possible on high-degree vertices.

\subsection{Threshold Graphs Realize All Minimum Degrees}
\begin{theorem}
For every \(n\ge 2\) and every \(k\in\{1,\ldots,n-1\}\), there exists a
connected threshold graph on \(n\) vertices with minimum degree \(k\).
\end{theorem}

\begin{proof}
Consider the threshold graph with construction sequence
\[
0^{\,n-k}1^{\,k}.
\]
The first \(n-k\) vertices are added as isolated vertices, and the last \(k\)
vertices are added as dominating vertices. Hence each of the first \(n-k\)
vertices has degree \(k\), while each of the last \(k\) vertices has degree
\(n-1\). Therefore the minimum degree is \(k\).
\end{proof}
\section{Preliminaries on the Kuramoto Model}
\label{pre-kura}

\subsection{Equilibria and Local Stability}
By the definition, equilibrium points of a dynamical system are the states where the system remains at rest, meaning that the first-order derivative of the state vanishes \cite{khalil2002nonlinear}.
Thus setting the right-hand side of \eqref{kuramoto} to zero provides the characterization of an equilibrium point of the Kuramoto model.

\begin{definition}[Equilibria of \eqref{kuramoto}]\label{equili}
We call $\boldsymbol\theta \in \mathbb{R}^n$ an \emph{equilibrium} of the Kuramoto model~\eqref{kuramoto} 
if it satisfies
\begin{equation}\label{gradient-vanish}
    \sum_{j=1}^n \boldsymbol{A}_{ij}\sin(\theta_j-\theta_i)=0,
    \quad \forall i\in[n].
\end{equation}
\end{definition}

An equilibrium point is said to be stable if sufficiently small perturbations around it remain small for all future time~\cite{khalil2002nonlinear}. 
To assess the stability of a given equilibrium point, one typically studies the linearization of the system at that point, which is characterized by the Jacobian matrix.

\begin{proposition}[Stable equilibrium of \eqref{kuramoto}]\label{second-order1}
	An equilibrium point \(\boldsymbol{\theta}\in\mathbb{R}^n\) of the Kuramoto model~\eqref{kuramoto} is stable if the Jacobian matrix \(\boldsymbol{\mathbf{J}}\), defined as: \[\boldsymbol{\mathbf{J}}_{ij} = \begin{cases}
    -\sum_{j=1}^n \boldsymbol{A}_{ij} \cos(\theta_i - \theta_j), & \text{if } i = j, \\[8pt]
    \boldsymbol{A}_{ij} \cos(\theta_i - \theta_j), & \text{if } i \neq j, \end{cases}\] is negative semi-definite.
\end{proposition}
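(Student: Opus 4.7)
The plan is twofold: first to verify the explicit form of the Jacobian by direct differentiation of the Kuramoto right-hand side, and then to invoke the gradient-flow structure of the model to promote negative semi-definiteness into Lyapunov stability through an energy-based argument.

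For the Jacobian formula, I would set $F_i(\boldsymbol\theta) := \sum_j \boldsymbol A_{ij}\sin(\theta_j-\theta_i)$, so the Kuramoto system reads $\dot{\boldsymbol\theta} = F(\boldsymbol\theta)$. A one-line differentiation yields $\partial_{\theta_j} F_i = \boldsymbol A_{ij}\cos(\theta_j-\theta_i) = \boldsymbol A_{ij}\cos(\theta_i-\theta_j)$ for $i\ne j$ (by evenness of cosine), and $\partial_{\theta_i} F_i = -\sum_k \boldsymbol A_{ik}\cos(\theta_i-\theta_k)$ on the diagonal; these coincide with the entries of $\mathbf{J}$ stated in the proposition. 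Differentiating~\eqref{E} also confirms $F = -\nabla E_G$, so the Kuramoto dynamics is a genuine gradient flow and $\mathbf{J} = -\nabla^2 E_G(\boldsymbol\theta)$. In particular $\mathbf{J}$ is symmetric with real eigenvalues, and negative semi-definiteness of $\mathbf{J}$ is equivalent to $\nabla^2 E_G(\boldsymbol\theta^*) \succeq 0$.

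For the stability claim, I would use $V(\boldsymbol\theta) := E_G(\boldsymbol\theta) - E_G(\boldsymbol\theta^*)$ as the Lyapunov function. Along any trajectory, $\dot V = \langle \nabla E_G, \dot{\boldsymbol\theta}\rangle = -\|\nabla E_G\|^2 \le 0$, so $V$ is nonincreasing. The hypothesis $\mathbf{J}\preceq 0$ gives $\nabla^2 E_G(\boldsymbol\theta^*) \succeq 0$, so a Taylor expansion yields $V(\boldsymbol\theta) \ge 0$ on a neighborhood of $\boldsymbol\theta^*$ along all directions transverse to $\ker \mathbf{J}$. Lyapunov's direct method then delivers stability of $\boldsymbol\theta^*$.

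The hard part will be the handling of the kernel of $\mathbf{J}$. Rotational invariance $\boldsymbol\theta\mapsto \boldsymbol\theta + c\mathbf{1}$ of $E_G$ forces the all-ones vector into $\ker \mathbf{J}$ at every equilibrium, so $\boldsymbol\theta^*$ is never an isolated fixed point and stability must be understood modulo the $S^1$-orbit. If additional null directions appear from degeneracy of $\nabla^2 E_G(\boldsymbol\theta^*)$, the second-order Taylor bound does not suffice on its own; one then either descends to the quotient by the symmetry, where the Hessian becomes positive definite on the transverse complement, or appeals to the \L{}ojasiewicz gradient inequality already cited in the introduction to guarantee that nearby trajectories converge to a nearby equilibrium along the degenerate manifold.
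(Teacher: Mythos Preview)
The paper does not supply an explicit proof of this proposition; it is presented as a standard linearization fact with a citation to Khalil, and the finer relationship between stability and the Hessian is deferred to Section~3.3, where the paper invokes the Absil--Mahony--Andrews theorem that, for real-analytic gradient systems, stable equilibria coincide with local minima of the potential. Your computation of $\mathbf J$ and the identification $\mathbf J = -\nabla^2 E_G$ are correct and match the paper's viewpoint.

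There is a genuine gap in your stability argument, and you have partly diagnosed it yourself. The step from $\nabla^2 E_G(\boldsymbol\theta^*)\succeq 0$ to ``$V\ge 0$ on a neighborhood along directions transverse to $\ker\mathbf J$'' does not deliver what Lyapunov's direct method needs: positive semi-definiteness of the Hessian at a single point does not force $V\ge 0$ on any open neighborhood, because along kernel directions the quadratic term vanishes and higher-order terms may be negative (think of $E(x,y)=x^2-y^4$ at the origin). Passing to the $S^1$ quotient removes only the rotational null direction, not additional degeneracies, and the \L{}ojasiewicz inequality guarantees convergence of trajectories to \emph{some} equilibrium but does not by itself guarantee that trajectories starting near $\boldsymbol\theta^*$ remain near $\boldsymbol\theta^*$ unless $\boldsymbol\theta^*$ is already known to be a local minimum. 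In short, semi-definiteness of $\mathbf J$ is a \emph{necessary} condition for stability (indeed, the sentence immediately preceding the proposition in the paper says exactly this), but is not sufficient without further structure. The paper's own Section~3.3 and the accompanying figure make the precise claim you would need: stable equilibria are exactly the local minima of $E_G$, which form a (generally proper) subset of the second-order stationary points. For the purposes of the paper, only the necessary direction is ever used.
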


\begin{proof}
Let $F_i(\boldsymbol\theta)=\sum_{j=1}^n \boldsymbol{A}_{ij}\sin(\theta_j-\theta_i)$ denote the right-hand side of the Kuramoto equation. Then \[\frac{\partial F_i}{\partial \theta_i}=-\sum_{j=1}^n \boldsymbol{A}_{ij}\cos(\theta_j-\theta_i),\] while for $i\neq j$, \[\frac{\partial F_i}{\partial \theta_j}=\boldsymbol{A}_{ij}\cos(\theta_j-\theta_i).\] Thus the Jacobian matrix has the stated entries.
\end{proof}

\begin{remark}
Since the homogeneous Kuramoto model is invariant under global phase shifts, the Jacobian matrix $\mathbf J$ always satisfies $\mathbf J \mathbf 1=0$. Thus every equilibrium has at least one zero eigenvalue, corresponding to the rotational symmetry of the model. We call an equilibrium \emph{degenerate} if $\mathbf J$ has more than one zero eigenvalue. The extreme case where all eigenvalues are zero is known as \emph{complete degeneracy}; The existence and stability of such equilibria were characterized in \cite{sclosa2024completely}. In the present work, we do not need to analyze degenerate equilibria separately, since they will be excluded by showing that every equilibrium with negative semidefinite Jacobian is fully synchronized, modulo global phase shifts.
\end{remark}

\subsection{Stationary Points and Local Optimality}
\begin{definition}[First-order stationary points of \eqref{E}]\label{stationaryy}
We call $\boldsymbol\theta\in\mathbb{R}^n$ a \emph{first-order stationary point} of the energy function~\eqref{E}
if it satisfies $\nabla E(\boldsymbol\theta) = 0$, namely
\begin{equation}\label{stationary}
    \sum_{j=1}^n \boldsymbol{A}_{ij}\sin(\theta_j-\theta_i)=0,
    \quad \forall i\in[n].
\end{equation}
\end{definition}

\begin{proposition}[Hessian $\boldsymbol{H}$ of \eqref{E}]\label{hessian}
For the Kuramoto energy~\eqref{E}, the Hessian has entries
\[
\boldsymbol{H}_{ij}:=(\nabla^2 E(\boldsymbol{\theta}))_{ij} =
\begin{cases}
    \sum_{j=1}^n \boldsymbol{A}_{ij} \cos(\theta_i - \theta_j), & \text{if } i = j, \\[8pt]
    -\boldsymbol{A}_{ij} \cos(\theta_i - \theta_j), & \text{if } i \neq j, \end{cases}
\]
\end{proposition}

\begin{proof}
Recall that $E(\boldsymbol\theta)=\frac12\sum_{i,j=1}^n \boldsymbol{A}_{ij}\bigl(1-\cos(\theta_i-\theta_j)\bigr)$. Differentiating with respect to $\theta_i$ gives $\frac{\partial E}{\partial \theta_i}=
\sum_{j=1}^n \boldsymbol{A}_{ij}\sin(\theta_i-\theta_j)$. Differentiating once more, we obtain
\[
\frac{\partial^2 E}{\partial \theta_i^2}
=
\sum_{j=1}^n \boldsymbol{A}_{ij}\cos(\theta_i-\theta_j),
\]
while for $i\neq j$,
\[
\frac{\partial^2 E}{\partial \theta_i\partial \theta_j}
=
- \boldsymbol{A}_{ij}\cos(\theta_i-\theta_j).
\]
Hence the Hessian matrix $\mathbf H=\nabla^2 E(\boldsymbol\theta)$ has entries as claimed.
\end{proof}

\begin{definition}[Second-order stationary points of \eqref{E}]
A configuration $\boldsymbol\theta\in\mathbb R^n$ is called a \emph{second-order stationary point} (SOSP) of~\eqref{E} if $\nabla E(\boldsymbol\theta) = 0$, and the Hessian $\boldsymbol{H}$ is positive semidefinite.
\end{definition}

\subsection{Correspondence Between the Two Sets of Concepts}
\begin{proposition}
    A configuration $\boldsymbol{\theta}\in\mathbb R^n$ is an equilibrium of the Kuramoto model \eqref{kuramoto} if and only if it is a first-order stationary point of the Kuramoto energy \eqref{E}.
\end{proposition}
\begin{proof}
    This is clear, since both are characterized by $$\sum_{j=1}^n \boldsymbol A_{ij} \sin(\theta_j - \theta_i) = 0,\qquad \forall i \in [n].$$
\end{proof}

Moreover, since \eqref{E} is real analytic, an equilibrium of \eqref{kuramoto} is stable if and only if it is a local minimum of \eqref{E}, modulo the global rotation symmetry.
Such an equivalence was established in the celebrated work~\cite{Absil2006StableEquilibrium}.

\begin{definition}[Local minimum of the energy function]
A configuration $\boldsymbol\theta \in \mathbb{R}^n$ is called a \emph{local minimum} of the energy function $E(\boldsymbol\theta)$ if there exists $\varepsilon > 0$ such that
\[
E(\boldsymbol\theta) \leq E(\boldsymbol\theta'), \quad \forall \boldsymbol\theta' \in \mathbb{R}^n \ \text{with}\ \|\boldsymbol\theta'-\boldsymbol\theta\| < \varepsilon.
\]
\end{definition}

Therefore, the relations between stable equilibria, second-order stationary points and local minima can be summarized in Figure~\ref{equivalence-gradient}.

\begin{figure}[H]
\centering
\begin{tikzpicture}
  \draw[thick] (0,0) ellipse (3cm and 1.8cm);
  \node at (0.2, 1.3) {\small SOSP of \eqref{E}};
  \draw[thick, fill=gray!20] (0,0) ellipse (2.2cm and 0.8cm);
\node at (0,0) {\small \parbox{5.5cm}{\centering Local minima of \eqref{E} \\  = stable equilibria of \eqref{kuramoto}}};
\end{tikzpicture}
\caption{Relation between different concepts.}
\label{equivalence-gradient}
\end{figure}
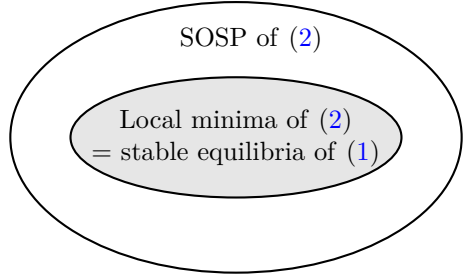
\section{Vector Geometry of Equilibria}\label{sec:geo-eq}
In this section, we recast the algebraic conditions introduced in Section~\ref{pre-kura} into geometric ones, namely combinatorial relations among vectors associated with the nodes. By doing so, the analysis of global synchronization on threshold graphs reduces to reasoning purely about geometric relations between the unit vectors corresponding to the angles in~$\boldsymbol{\theta}$.

\subsection{Vector-labeled Graph}
Given a state \( \boldsymbol{\theta} \in \mathbb{R}^n \) of the Kuramoto model on graph $G=(V,E)$. Associate each node \( i \in V \) its corresponding unit vector
\[
\boldsymbol{v}_i := (\cos \theta_i, \sin \theta_i) \in \mathbb{R}^2.
\]
This is called a \emph{phasor}, following the terminology from Yoshiki Kuramoto \cite{Kuramoto1975}.
The object \textit{vector-labeled graph} \( \left( G, \{ \boldsymbol{v}_i \}_{i \in V} \right) \) encodes \( \boldsymbol{\theta} \) combinatorially and geometrically.

\begin{figure}[H]
\centering
\begin{tikzpicture}[scale=1.8]
\draw[thick, black] (0,0) circle (1);
\fill[gray!20] (0,0) -- (0.25,0) arc (0:60:0.25) -- cycle;
\draw[thick, -stealth] (0,0) -- (60:1) 
    node[fill=black,circle,inner sep=1.5pt,label=above:$\boldsymbol{v}_i$]{};
\fill[black] (0,0) circle (0.5pt);
\draw[dotted, thick, black] (0,0) -- (1,0);
\node at (30:0.35) {$\theta_i$};
\end{tikzpicture}
\caption{Representation of $\boldsymbol{v}_i$ with angle $\theta_i$.}
\end{figure}

\subsection{Alignment and Equilibrium}\label{Alignment-and-equilibrium}
Given a vector-labeled graph $(G, \{\boldsymbol{v}_i\}_{i\in V})$ associated with the state~$\boldsymbol{\theta}$, 
we reformulate the equilibrium condition~\eqref{gradient-vanish} (equivalently~\eqref{stationary}) in geometric terms.

\begin{lemma}[Equilibrium condition rephrased geometrically]\label{eq-vec}
A state \( \boldsymbol{\theta} \) is an equilibrium of the Kuramoto model~\eqref{kuramoto} on $G=(V,E)$ with adjacency $\boldsymbol{A}$ {if and only if}, for each \( i \in V \), 
\begin{equation}\label{formula:eq-ec}
\sum_{j\in N(i)} \boldsymbol{v}_j = \mu_i \boldsymbol{v}_i\quad\text{where}\quad \mu_i=\sum_{j\in N(i)}\cos(\theta_j-\theta_i).
\end{equation} where $\boldsymbol v_i:=(\cos\theta_i,\sin\theta_i)$.
\end{lemma}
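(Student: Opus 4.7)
The plan is to prove the lemma by a direct decomposition of each neighbor's phasor in the orthonormal frame attached to $\boldsymbol{v}_i$, and to read off the equilibrium condition as the vanishing of one coordinate in that frame.

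First I would rewrite the equilibrium equation \eqref{gradient-vanish} using the fact that $G$ is a simple graph, so $A_{ij}$ is the indicator of $j\in N(i)$; the condition becomes $\sum_{j\in N(i)} \sin(\theta_j-\theta_i) = 0$ for every $i\in V$. Next I would introduce the unit vector $\boldsymbol{v}_i^{\perp} := (-\sin\theta_i,\cos\theta_i)$, which together with $\boldsymbol{v}_i$ forms an orthonormal basis of $\mathbb{R}^2$. A direct trigonometric computation gives the orthogonal decomposition
\begin{equation*}
\boldsymbol{v}_j \;=\; \cos(\theta_j-\theta_i)\,\boldsymbol{v}_i \;+\; \sin(\theta_j-\theta_i)\,\boldsymbol{v}_i^{\perp},
\end{equation*}
since the inner products $\langle \boldsymbol{v}_j,\boldsymbol{v}_i\rangle$ and $\langle \boldsymbol{v}_j,\boldsymbol{v}_i^\perp\rangle$ are exactly $\cos(\theta_j-\theta_i)$ and $\sin(\theta_j-\theta_i)$.

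Summing this identity over $j\in N(i)$ then yields
\begin{equation*}
\sum_{j\in N(i)} \boldsymbol{v}_j \;=\; \mu_i\,\boldsymbol{v}_i \;+\; \Bigl(\sum_{j\in N(i)} \sin(\theta_j-\theta_i)\Bigr)\,\boldsymbol{v}_i^{\perp},
\end{equation*}
with $\mu_i$ exactly as defined in the statement. Since $\{\boldsymbol{v}_i,\boldsymbol{v}_i^\perp\}$ is an orthonormal basis, the right-hand side is collinear with $\boldsymbol{v}_i$ (equivalently, equals $\mu_i\boldsymbol{v}_i$) if and only if the coefficient of $\boldsymbol{v}_i^\perp$ vanishes, which by the previous step is the equilibrium condition at node $i$. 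Doing this at every $i\in V$ gives the two-way implication.

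This proof is essentially a one-line orthogonal projection, so I do not anticipate any real obstacle; the only point to be slightly careful about is making explicit that one is using $A_{ij}\in\{0,1\}$ to collapse the weighted sum in \eqref{gradient-vanish} to a sum over $N(i)$, which is what allows the neighbor-sum vector $\sum_{j\in N(i)}\boldsymbol{v}_j$ to appear cleanly on the left-hand side of \eqref{formula:eq-ec}.
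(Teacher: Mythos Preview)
Your proof is correct. The paper proves the same equivalence by passing to complex exponentials: it rewrites $\sum_{j\in N(i)}\boldsymbol{v}_j=\mu_i\boldsymbol{v}_i$ as $\sum_{j\in N(i)} e^{I\theta_j}=\mu_i e^{I\theta_i}$, divides through by $e^{I\theta_i}$, and identifies the real and imaginary parts of $\sum_{j\in N(i)} e^{I(\theta_j-\theta_i)}$ with $\mu_i$ and the equilibrium sum $\sum_{j\in N(i)}\sin(\theta_j-\theta_i)$, respectively. Your orthogonal decomposition in the frame $\{\boldsymbol{v}_i,\boldsymbol{v}_i^{\perp}\}$ is the real-vector version of exactly this rotation by $-\theta_i$: the $\boldsymbol{v}_i$- and $\boldsymbol{v}_i^{\perp}$-coordinates are the real and imaginary parts after division by $e^{I\theta_i}$. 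So the two arguments are the same computation in different notation; yours is a bit more direct and avoids the detour through Euler's formula, while the paper's complex-number phrasing connects naturally to the ``force'' interpretation $e^{I(\theta_j-\theta_i)}$ discussed in the remarks that follow.
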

The proof can be found in \cite{monzon2005global}, we present here for completeness.
\begin{proof}[Proof of Lemma \ref{eq-vec}]
The condition~\eqref{formula:eq-ec} is equivalently written as
\begin{equation}\label{equivalent-eq}
    \sum_{j\in N(i)} e^{I\theta_j} = \mu_i e^{I\theta_i}.
\end{equation}
Indeed, rewrite all vectors \( \boldsymbol{v}_i \) in (\ref{formula:eq-ec}) in terms of their Cartesian components, yielding  
\[ \sum_{j\in N(i)} (\cos\theta_j,\sin\theta_j) = \mu_i(\cos\theta_i,\sin\theta_i). \]  
Equivalently,
\begin{equation}\label{eq:cartesian-balance}
\left\{
\begin{aligned}
    \sum_{j\in N(i)}\cos\theta_j &= \mu_i\cos\theta_i, \\
    \sum_{j\in N(i)}\sin\theta_j &= \mu_i\sin\theta_i.
\end{aligned}
\right.
\end{equation}  
The condition \eqref{eq:cartesian-balance} is equivalent to the equality between the following two complex numbers:  
\[
\sum_{j\in N(i)} \cos\theta_j + I\sum_{j\in N(i)}\sin\theta_j = \mu_i\cos\theta_i + I \mu_i \sin\theta_i,
\]  
which can be compactly rewritten as \eqref{equivalent-eq} using the Euler formula $e^{I\theta_i}=\cos\theta_i+I\sin\theta_i$.

Now that we have derived the equivalence between conditions (\ref{formula:eq-ec}) and (\ref{equivalent-eq}), our goal reduces to proving the equivalence between (\ref{equivalent-eq}) and (\ref{gradient-vanish}), namely
\[ \forall i\in V,\qquad \sum_{j\in N(i)}e^{I\theta_j} = \mu_i e^{I\theta_i} \Longleftrightarrow \sum_{j\in N(i)}\sin(\theta_j-\theta_i) = 0. \]
The implication from the left-hand side to the right-hand side is established by showing that
\[
I\sum_{j\in N(i)} \sin(\theta_j-\theta_i ) + \sum_{j\in N(i)}\cos(\theta_j-\theta_i) \in \mathbb{R}.
\]
This can be verified by a direct computation: $$\begin{aligned}
        & I\sum_{j\in N(i)} \sin(\theta_j-\theta_i) + \sum_{j\in N(i)}\cos(\theta_j-\theta_i)\\
        &= \sum_{j\in N(i)}e^{I(\theta_j-\theta_i)}\\
        &= \frac{\sum_{j\in N(i)} e^{I\theta_j}}{e^{I\theta_i}}\\
        &= \mu_i.
    \end{aligned}$$
To prove the other direction, we construct for every node $i\in V$, such a \(\mu_i\) that satisfies the left-hand side. 
Dividing both sides of (\ref{equivalent-eq}) by \( e^{I\theta_i} \) (noting that \( e^{I\theta_i} \neq 0 \)), we obtain \[ \frac{\sum_{j\in N(i)} e^{I\theta_j}}{e^{I\theta_i}} = \mu_i. \]
Compute $$
\begin{aligned}
    \frac{ \sum_{j\in N(i)} e^{I\theta_j}}{e^{I\theta_i}} &= \sum_{j\in N(i)} e^{I(\theta_j - \theta_i)}\\
    &= \sum_{j\in N(i)} (\cos(\theta_j - \theta_i) + I\sin(\theta_j - \theta_i))\\
    &= \sum_{j\in N(i)} \cos(\theta_j - \theta_i)
\end{aligned}
$$
where the last equality is due to the fact $\sum_{j\in N(i)}\sin(\theta_j-\theta_i)=0$.
Thus $$\sum_{j\in N(i)}e^{I\theta_j} = \mu_i e^{I\theta_i}$$ holds for every $i\in V$ by letting $\mu_i=\sum_{j\in N(i)}\cos(\theta_j-\theta_i)$.
\end{proof}

\begin{remark}[Normalization and local coherence]
One can rewrite~\eqref{eq-vec} into
\[
\boldsymbol{v}_i = \frac{1}{\mu_i} \sum_{j\in N(i)} \boldsymbol{v}_j, 
\quad \text{where } \mu_i = \left\| \sum_{j\in N(i)} \boldsymbol{v}_j \right\| 
= \sum_{j\in N(i)} \cos(\theta_j - \theta_i).
\]

The expression describes a geometric condition of equilibrium:
\textit{At equilibrium, each phasor aligns with the direction of the sum of its neighbors' vectors and is scaled to unit length.}
Although introduced as a normalization factor, the quantity \( \mu_i \) carries physical meanings.  
If the neighbors are highly coherent, the sum vector is large, requiring a larger \( \mu_i \) to normalize \( \boldsymbol{v}_i \) to unit length; if the directions are dispersed, \( \mu_i \) is small.  
In this sense, \( \mu_i \) measures how strongly the phasors of node \( i \)'s neighbors align directionally.  
Indeed, defining the local order parameter $R_i := \frac{1}{\deg(i)} \left| \sum_{j \in N(i)} e^{I\theta_j} \right|,$
we have the relation \( \mu_i = \deg(i) \cdot R_i \), showing that \( \mu_i \) is proportional to the standard notion of local phase coherence.
\end{remark}

\begin{remark}[Equilibrium and \textit{force}]
The Kuramoto model \eqref{kuramoto} admits a natural physical interpretation using the concept of \textit{force}.  
Write each phasor \( \boldsymbol{v}_i \) in complex form as \( e^{I\theta_i} \).  
Then, intuitively, each neighboring node \( j \in N(i) \) pulls node \( i \) with a \textit{force} given by $e^{I(\theta_j-\theta_i)}$.
Under this interpretation, equation~\eqref{eq-vec} states that:
    \textit{At equilibrium, the total force acting on node \( i \) sums to a real scalar.}
In other words, the net force is purely radial and contributes only to scaling.  
Due to the unit circle geometry \( \left| \boldsymbol{v}_i \right| = 1 \) of the Kuramoto model, such a scaling force does not induce any motion while a tangential component can cause rotation.
\end{remark}
    
\subsection{Angles and Second-Order Stationarity}\label{sec:angle-stability}
Given an equilibrium $\boldsymbol{\theta}\in\mathbb R^n$ of the Kuramoto model on graph $G=(V,E)$, if there exists a node $i \in V$ whose corresponding $\mu_i < 0$, then $\boldsymbol{\theta}$ cannot be a second-order stationary point of the Kuramoto energy \eqref{E}.
This is because a small perturbation that moves $\boldsymbol{v}_i$ toward the direction of $\sum_{j\in N(i)} \boldsymbol{v}_j$ strictly decreases the energy.

\begin{lemma}\label{stable-eq}
If $\boldsymbol{\theta}\in\mathbb R^n$ is a second-order stationary point of~\eqref{E},
then for every $i\in V$ there exists $\mu_i\ge 0$ such that $$\sum_{j\in N(i)} \boldsymbol{v}_j = \mu_i \boldsymbol{v}_i.$$
\end{lemma}

\begin{remark}
    In particular, either the summation of phasors of neighborhood of $i$ is a zero vector, i.e., $\sum_{j\in N(i)} \boldsymbol{v}_j=\boldsymbol 0$ (i.e., $\mu_i=0$),
or if it is non-zero, it must point towards the same direction as $\boldsymbol{v}_i$, i.e., $$\angle\!\left(\sum_{j\in N(i)} \boldsymbol{v}_j,\, \boldsymbol{v}_i\right)=0.$$
\end{remark}

\begin{proof}[Proof of Lemma \ref{stable-eq}]
By Proposition \ref{hessian}, the diagonal entry of the Hessian is $$\boldsymbol H_{ii} = \sum_{j\in N(i)} \cos(\theta_i-\theta_j).$$
Since $\theta$ is an equilibrium, by Lemma~\ref{eq-vec}, we have
$$\boldsymbol H_{ii} =\sum_{j\in N(i)} e^{I(\theta_j-\theta_i)}
= \mu_i  \in \mathbb R.$$
If $\mu_i<0$, then $\boldsymbol H_{ii}$ is not positive semidefinite,
which contradicts the second-order stationarity of $\boldsymbol{\theta}$.
\end{proof}

\begin{figure}[H]
\centering
\begin{tikzpicture}[scale=1,>=stealth]
\tikzstyle{vi}=[->, thick]
\tikzstyle{sumv}=[->, thick, dashed]
\tikzstyle{feasible}=[line width=6pt, gray!40, cap=round]

\begin{scope}[shift={(-3,1.5)}]
    \draw[thin] (0,0) circle (1);
    \draw[feasible] (0,0) -- (2,0);
    \draw[vi] (0,0) -- (1,0);
    \draw[sumv] (0,0) -- ({1.6*cos(40)},{1.6*sin(40)});
    \node at (0,-1.4) {(A)};
\end{scope}

\begin{scope}[shift={(3,1.5)}]
    \draw[thin] (0,0) circle (1);
    \draw[feasible] (0,0) -- (2,0);
    \draw[vi] (0,0) -- (1,0);
    \draw[sumv] (0,0) -- (-0.9,0);
    \node at (0,-1.4) {(B)};
\end{scope}

\begin{scope}[shift={(-3,-2.2)}]
    \draw[thin] (0,0) circle (1);
    \draw[feasible] (0,0) -- (2,0);
    \draw[vi] (0,0) -- (1,0);
    \draw[sumv] (0,0) -- (1,0);
    \node at (0,-1.4) {(C)};
\end{scope}

\begin{scope}[shift={(3,-2.2)}]
    \draw[thin] (0,0) circle (1);
    \draw[feasible] (0,0) -- (2,0);
    \draw[vi] (0,0) -- (1,0);
    \draw[sumv] (0,0) -- (1.8,0);
    \node at (0,-1.4) {(D)};
\end{scope}

\end{tikzpicture}

        \caption{The solid arrow represents $\boldsymbol{v}_i$, and the dashed arrow represents $\sum_{j\in N(i)} \boldsymbol{v}_j$. For each node $i$, the four panels show all possible relations between $\boldsymbol{v}_i$ and $\sum_{j\in N(i)} \boldsymbol{v}_j$. Among them, case (A) violates the first-order condition, while case (B) violates the second-order stationary condition, since the aggregated phasor $\sum_{j\in N(i)} \boldsymbol{v}_j$ lies outside the feasible (gray) region corresponding to $\mu_i \ge 0$.}
    \label{fig:stability_conditions}\end{figure}
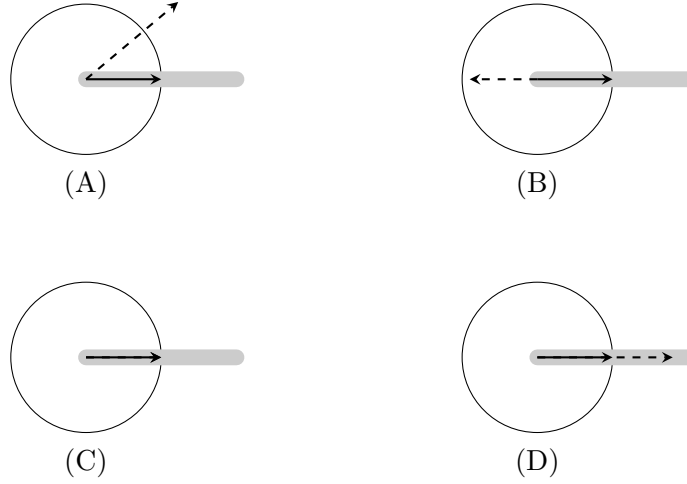

The geometric conditions of second-order stationary points presented here are only the most basic one.
Further necessary geometric conditions will be established in subsequent
sections, and these will be essential for showing that every connected
threshold graph is globally synchronizing.
\section{Local Synchronization Primitives}\label{Primitives}
\subsection{A Fundamental Geometric Fact}
We begin with a geometric lemma that characterizes all possible relative positions of two unit vectors satisfying a pair of linear relations.
This fact will serve as the local synchronization primitive used in the induction argument in Section \ref{main_pf}.

\begin{lemma}\label{false-twins}
Let $\boldsymbol{v}_a, \boldsymbol{v}_b \in \mathbb{S}^1$.
Suppose there exist a vector $\boldsymbol{q} \in \mathbb{R}^2$ and scalars $\mu_a, \mu_b \in \mathbb{R}$ such that
\begin{equation}\label{geo}
	\boldsymbol{v}_b + \boldsymbol{q} = \mu_a \boldsymbol{v}_a
\qquad\text{and}\qquad
\boldsymbol{v}_a + \boldsymbol{q} = \mu_b \boldsymbol{v}_b .
\end{equation}
Then the positions of $\boldsymbol{v}_a$ and $\boldsymbol{v}_b$ fall into one of the following three cases:
\begin{enumerate}[label=(\arabic*), leftmargin=*, itemsep=0.5em]
    \item $\boldsymbol{v}_a = \boldsymbol{v}_b$ and $\mu_a = \mu_b$;

    \item $\boldsymbol{v}_a = -\boldsymbol{v}_b$, $\mu_a + \mu_b = -2$, and $(\mu_a,\mu_b)\neq (-1,-1)$;

    \item $\mu_a = \mu_b = -1$, and $\boldsymbol{v}_a + \boldsymbol{v}_b + \boldsymbol{q} = 0$.
\end{enumerate}
\end{lemma}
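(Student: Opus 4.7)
The plan is to derive a single linear constraint by subtracting the two equations, and then run a short case analysis on the resulting scalar coefficients.

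First, I would subtract the second equation from the first to eliminate $\boldsymbol{q}$, obtaining
\[
(1+\mu_a)\,\boldsymbol{v}_a \;=\; (1+\mu_b)\,\boldsymbol{v}_b.
\]
This one relation, together with the fact that $\boldsymbol{v}_a, \boldsymbol{v}_b \in \mathbb{S}^1$, already carries almost all the information. I would then split into two regimes according to whether $1+\mu_a$ and $1+\mu_b$ both vanish or not.

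If $\mu_a = \mu_b = -1$, the displayed equation is trivially satisfied, and I would substitute back into either of the original equations to obtain $\boldsymbol{v}_a + \boldsymbol{v}_b + \boldsymbol{q} = 0$, which is exactly case~(3) (with $\boldsymbol{v}_a, \boldsymbol{v}_b$ otherwise unconstrained on $\mathbb{S}^1$). Otherwise, at least one of $1+\mu_a$, $1+\mu_b$ is nonzero; taking norms in the displayed identity and using $\|\boldsymbol{v}_a\| = \|\boldsymbol{v}_b\| = 1$ forces $|1+\mu_a| = |1+\mu_b|$, hence $1+\mu_a = \varepsilon(1+\mu_b)$ with $\varepsilon \in \{+1,-1\}$. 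In the $\varepsilon = +1$ subcase the common factor $1+\mu_a$ is nonzero and dividing yields $\boldsymbol{v}_a = \boldsymbol{v}_b$ together with $\mu_a = \mu_b$, i.e.\ case~(1). In the $\varepsilon = -1$ subcase I get $\mu_a + \mu_b = -2$ and, again dividing by the nonzero factor $1+\mu_a$, the relation $\boldsymbol{v}_a = -\boldsymbol{v}_b$; the nonvanishing assumption precisely rules out $(\mu_a,\mu_b) = (-1,-1)$, matching case~(2).

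There is no real obstacle here; the only thing to be careful about is the degenerate branch $\mu_a = \mu_b = -1$, which must be pulled out before dividing by $1+\mu_a$ and which is the unique source of case~(3). It is worth noting that the statement is intentionally exhaustive rather than exclusive: case~(3) overlaps with the boundary of cases~(1) and~(2), but the final conclusion lists all admissible geometric configurations, which is all that will be needed when \emph{Lemma~\ref{false-twins}} is invoked as a local synchronization primitive in the inductive argument of Section~\ref{main_pf}.
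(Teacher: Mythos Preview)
Your proposal is correct and follows essentially the same approach as the paper: both derive the single relation $(1+\mu_a)\boldsymbol{v}_a=(1+\mu_b)\boldsymbol{v}_b$ by eliminating $\boldsymbol{q}$, isolate the degenerate case $\mu_a=\mu_b=-1$ as case~(3), and then use that $\boldsymbol{v}_a,\boldsymbol{v}_b$ are unit vectors to split the remaining situation into the parallel and antiparallel subcases. Your version is in fact slightly cleaner, since you make the norm argument explicit and note why exactly one of $1+\mu_a,1+\mu_b$ vanishing is impossible, whereas the paper leaves this implicit.
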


\begin{proof}
According to \eqref{geo}, we have
$$\boldsymbol{q}=\mu_a\boldsymbol{v}_a-\boldsymbol{v}_b=\mu_b\boldsymbol{v}_b-\boldsymbol{v}_a$$
Thus
\begin{equation}\label{vavb}
    (\mu_a + 1)\boldsymbol{v}_a = (\mu_b + 1)\boldsymbol{v}_b.
\end{equation}
If \(\mu_a \neq -1\) and \(\mu_b \neq -1\), then from \eqref{vavb}, it must hold that
\[ \boldsymbol{v}_a \parallel \boldsymbol{v}_b. \]
This leads to case (1) and (2):

(1) If \(\mu_a = \mu_b\neq -1\), then \(\boldsymbol{v}_a = \boldsymbol{v}_b\);

(2) If \(\mu_a + 1 = -(\mu_b + 1)\), then equivalently, we have \(\mu_a + \mu_b = -2\). This implies $\boldsymbol{v}_a = -\boldsymbol{v}_b$.

\noindent
If instead $\mu_a=\mu_b=-1$, then we have $\boldsymbol{q} + \boldsymbol{v}_a +\boldsymbol{v}_b = \boldsymbol{0}$ corresponding to case (3).
\end{proof}

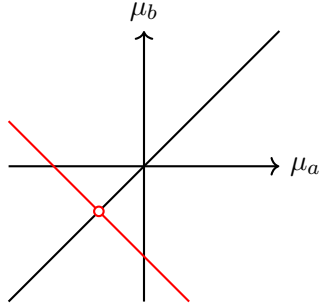
\begin{figure}[H]
\centering
\begin{tikzpicture}[scale=0.6]
    \draw[thick, ->] (-3, 0) -- (3, 0) node[right] {$\mu_a$};
    \draw[thick, ->] (0, -3) -- (0, 3) node[above] {$\mu_b$};

    \draw[thick, black] (-3, -3) -- (3, 3);
    \node[black, above right] at (2,2) {};
    \draw[thick, black] (-3,1) -- (1,-3);
    \node[red, below right] at (1.2,-3.2) {};
    \draw[fill=white, draw=black, thick] (-1,-1) circle (3pt);
\end{tikzpicture}
\caption{The feasible region for $(\mu_a,\mu_b)$ consists of:  
(1) the black line $\mu_a = \mu_b$, excluding the intersection point, corresponding to $\boldsymbol{v}_a = \boldsymbol{v}_b$;  
(2) the red line $\mu_a + \mu_b = -2$, excluding the intersection point, corresponding to the antipodal case $\boldsymbol{v}_a = -\boldsymbol{v}_b$;  
(3) their intersection point $(-1,-1)$, marked by a hollow dot, at which no additional constraint is imposed on the vectors $\boldsymbol{v}_a$ and $\boldsymbol{v}_b$.}
\end{figure}

\begin{corollary}\label{sync_twins}
Under the assumptions of Lemma~\ref{false-twins}, if in addition $\mu_a, \mu_b \ge 0$, then $\boldsymbol{v}_a = \boldsymbol{v}_b$.
\end{corollary}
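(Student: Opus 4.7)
The plan is to obtain the conclusion by a direct case analysis using Lemma~\ref{false-twins}, whose trichotomy exhausts all possibilities for the pair $(\boldsymbol{v}_a,\boldsymbol{v}_b)$ under the two linear relations \eqref{geo}. Since the corollary only adds the extra hypothesis $\mu_a,\mu_b\ge 0$, the proof amounts to ruling out cases (2) and (3) of the lemma by a simple sign argument.

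First, I would apply Lemma~\ref{false-twins} to conclude that one of cases (1)--(3) must hold. Case (2) requires $\mu_a+\mu_b=-2$, which is incompatible with $\mu_a,\mu_b\ge 0$ since this would force $\mu_a+\mu_b\ge 0$. Case (3) requires $\mu_a=\mu_b=-1$, which again contradicts the non-negativity hypothesis. Therefore only case (1) remains, yielding $\boldsymbol{v}_a=\boldsymbol{v}_b$ (and incidentally $\mu_a=\mu_b$).

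There is no real obstacle here: the entire content has been absorbed into Lemma~\ref{false-twins}, and the corollary is a one-line specialization. The only thing worth checking carefully is that the lemma's trichotomy is indeed exhaustive (so that eliminating two of the three cases really forces the first), which follows directly from the proof of Lemma~\ref{false-twins} as presented.
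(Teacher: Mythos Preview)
Your proof is correct and follows exactly the paper's approach: invoke the trichotomy of Lemma~\ref{false-twins} and observe that the sign hypothesis $\mu_a,\mu_b\ge 0$ excludes cases~(2) and~(3), leaving only $\boldsymbol{v}_a=\boldsymbol{v}_b$. The paper's proof is just the one-line version of what you wrote.
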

\begin{proof}
With $\mu_a,\mu_b \ge 0$, Lemma~\ref{false-twins} leaves only case~(1), hence $\boldsymbol{v}_a=\boldsymbol{v}_b$.
\end{proof}

\subsection{Phasor Geometry of Closed Twins}\label{sec:closed-twins}
It is a direct corollary of Lemma \ref{false-twins} and Lemma \ref{stable-eq} that closed twins synchronize at any stable equilibrium $\boldsymbol{\theta}$ of Kuramoto model \ref{kuramoto} on any connected graph. Let us first state the definition of closed twins in the graph theory.
\begin{definition}Let $G=(V,E)$ be a graph. 
Two nodes $i,j \in V$ are called 
\emph{closed twins} if
\begin{equation}\label{st-closed}
N[i] = N[j].\end{equation}
\end{definition}

\begin{figure}[H]
\centering
\definecolor{TwinBlue}{RGB}{0,0,0}

\tikzset{
  top/.style={circle,draw,fill=TwinBlue,inner sep=1.5pt},
  bot/.style={circle,draw,fill=white,inner sep=1.2pt},
  edge/.style={thin},
  stub/.style={gray!55, thin}
}

\newcommand{\BottomRow}{
  \foreach \x in {-2,-1,0,1,2}{
    \node[bot] (b\x) at (\x,0) {};
    \draw[stub] (b\x) -- ++(-0.2,-0.3);
    \draw[stub] (b\x) -- ++( 0.2,-0.3);
  }
}
\newcommand{\ConnectAll}[1]{ 
  \foreach \x in {-2,-1,0,1,2}{ \draw[edge] (#1) -- (b\x); }
}

\begin{subfigure}[b]{0.45\textwidth}  
\centering
\begin{tikzpicture}[x=0.7cm,y=0.7cm] 
  \BottomRow
  \node[top] (u) at (-1,2) {};
  \node[top] (v) at ( 1,2) {};
  \ConnectAll{u}
  \ConnectAll{v}
  \draw[edge] (u)--(v); 
\end{tikzpicture}
\end{subfigure}

\caption{The two black nodes form a pair of closed twins.}
\label{fig:twin-two-panels}
\end{figure}

\begin{corollary}\label{cor:twins-sync}
Let $\boldsymbol{\theta}\in\mathbb R^n$ be a second-order stationary point of the energy function $E_G(\boldsymbol{\theta})$. 
If nodes $a$ and $b$ form a pair of closed twins (satisfying \eqref{st-closed}), then $\boldsymbol{v}_a = \boldsymbol{v}_b$.
\end{corollary}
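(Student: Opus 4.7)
The plan is to reduce the corollary directly to the fundamental geometric fact in Lemma~\ref{false-twins} via Corollary~\ref{sync_twins}, using the closed-twin hypothesis to produce the shared vector $\boldsymbol{q}$ and the second-order stationarity to ensure the sign conditions on $\mu_a,\mu_b$.

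First I would unpack the closed-twin condition $N[a]=N[b]$: this forces $a\sim b$ and $N(a)\setminus\{b\}=N(b)\setminus\{a\}$. Write $Q:=N(a)\cap N(b)$ for this common neighborhood, so that $N(a)=\{b\}\cup Q$ and $N(b)=\{a\}\cup Q$, and set
\[
\boldsymbol{q}\ :=\ \sum_{j\in Q}\boldsymbol{v}_j.
\]
Since $\boldsymbol{\theta}$ is in particular an equilibrium, applying the geometric first-order condition (Lemma~\ref{eq-vec}) at nodes $a$ and $b$ and splitting off the $a$--$b$ edge gives
\[
\boldsymbol{v}_b+\boldsymbol{q}=\mu_a\boldsymbol{v}_a,\qquad \boldsymbol{v}_a+\boldsymbol{q}=\mu_b\boldsymbol{v}_b.
\]
This is exactly the hypothesis of Lemma~\ref{false-twins}, applied to the phasors of the twin pair.

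Next I would invoke the second-order information. By Lemma~\ref{stable-eq}, at any SOSP the aggregated neighbor vector at each node is either zero or positively collinear with the node's own phasor, which forces
\[
\mu_a=\sum_{j\in N(a)}\cos(\theta_j-\theta_a)\ \ge\ 0,\qquad \mu_b\ \ge\ 0.
\]
With both scaling constants nonnegative, Corollary~\ref{sync_twins} applies and immediately yields $\boldsymbol{v}_a=\boldsymbol{v}_b$, i.e.\ $\theta_a\equiv\theta_b\pmod{2\pi}$, which is the claim.

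There is no real obstacle here; the only subtle point is ensuring the correct bookkeeping of the $a$--$b$ edge when writing $N(a)$ and $N(b)$ (so that the \emph{same} vector $\boldsymbol{q}$ appears in both equations), and then recognizing that the SOSP hypothesis is precisely what rules out cases (2) and (3) of Lemma~\ref{false-twins}, where at least one of $\mu_a,\mu_b$ would have to be negative.
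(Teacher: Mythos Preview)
Your proposal is correct and follows essentially the same approach as the paper: use the closed-twin condition to extract a common vector $\boldsymbol{q}$ so that the first-order equilibrium equations at $a$ and $b$ match the hypotheses of Lemma~\ref{false-twins}, then invoke the second-order stationarity (Lemma~\ref{stable-eq}) to get $\mu_a,\mu_b\ge 0$ and conclude via Corollary~\ref{sync_twins}. The paper's proof is organized identically, with $M$ in place of your $Q$.
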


\begin{proof}
From Lemma~\ref{stable-eq}, since $\boldsymbol{\theta}$ is a second-order stationary point,
there exist $\mu_a,\mu_b \ge 0$ such that
\[
\begin{cases}
\displaystyle \sum_{j \in N(a)} \boldsymbol{v}_j = \mu_a \boldsymbol{v}_a, \\[4pt]
\displaystyle \sum_{j \in N(b)} \boldsymbol{v}_j = \mu_b \boldsymbol{v}_b .
\end{cases}
\]

Because $N[a]=N[b]$, we have 
\(
N(a)\setminus\{b\} = N(b)\setminus\{a\},
\)
and we denote this common set by $M$.
Thus,
\[ \begin{cases}
\displaystyle \sum_{j \in M} \boldsymbol{v}_j + \boldsymbol{v}_b = \mu_a \boldsymbol{v}_a, \\[4pt]
\displaystyle \sum_{j \in M} \boldsymbol{v}_j + \boldsymbol{v}_a = \mu_b \boldsymbol{v}_b .
\end{cases} \]

By Corollary~\ref{sync_twins} and the fact that $\mu_a,\mu_b \ge 0$, we conclude that 
$\boldsymbol{v}_a = \boldsymbol{v}_b$.
\end{proof}

Moreover, Corollary \ref{cor:twins-sync} implies immediately that windmill graphs are globally synchronizing.
Indeed, we start from an arbitrary second-order stationary point $\boldsymbol{\theta}$ of $E$ and prove that it is fully synchronized, meaning that $\theta_i=\theta_j$ modulo $2\pi$ for all $i,j\in V$.

\begin{definition}[Windmill graphs]
For integers $p\geq 1$ and $m\geq 2$, the \emph{windmill graph} $W_{p,m}$ is the graph obtained by gluing $m$ copies of $K_p$ along a single common vertex.
\end{definition}

\captionsetup[subfigure]{justification=centering, font=small, skip=2pt}
\begin{figure}[H]
    \centering
        \includegraphics[width=0.3\linewidth]{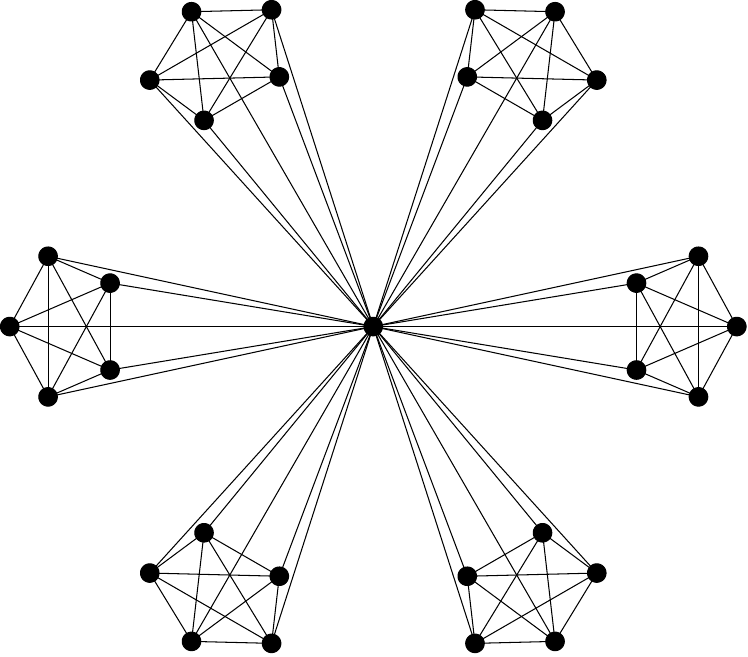}
        \caption{The windmill graph $W_{6,6}$.}
\end{figure}

\begin{theorem}
Every windmill graph $W_{p,m}$ ($p\geq 1$, $m\geq 2$) is second-order globally synchronizing. In particular, it is globally synchronizing.
\end{theorem}

\begin{proof}
Let $G$ be a windmill graph $W_{p,m}$ with central vertex $c$.
Then there exists a partition of the vertex set
\[
V \setminus \{c\} \;=\; \bigsqcup_{l=1}^m S_\ell
\]
such that each induced subgraph $G[S_\ell]$ is a clique, and every vertex in $S_\ell$
is adjacent to $c$.
For each $\ell$, all vertices in $S_\ell$ have identical closed neighborhoods and
therefore form a class of closed twins.
By Corollary~\ref{cor:twins-sync}, all vertices in each $S_\ell$ must share the same phase.
According to Lemma \ref{stable-eq}, we have for $\forall i \in S_l$, that $$\boldsymbol{v}_i =\mu_i\big( \boldsymbol{v}_c+\sum_{j\in S_l,j\neq i}\boldsymbol{v}_j\big),\quad \mu_i\geq 0$$

We distinguish two cases.

\emph{Case 1.} $\mu_i>0$. It is then clear that either $\boldsymbol{v}_c = \boldsymbol{v}_i$ or $\boldsymbol{v}_c = -\boldsymbol{v}_i$ holds.
If $\boldsymbol{v}_c = -\boldsymbol{v}_i$, then $\boldsymbol{\theta}$ is not a second-order stationary point.
Indeed, let $\boldsymbol{x}\in\mathbb{R}^n$ be the vector whose $i$-th component equals $1$ if $i\in S_l$ and $0$ otherwise.
The quadratic form associated with the Hessian matrix $\boldsymbol{H}$ can be computed as
$$
\begin{aligned}
\boldsymbol{x}^\top \boldsymbol{H}\boldsymbol{x} & =\frac{1}{2}\sum_{i=1}^n\sum_{j=1}^n \boldsymbol{A}_{ij}\cos(\theta_i-\theta_j)(x_i-x_j)^2\\
& =
\sum_{i\in S_l}\sum_{j\in V\setminus S_l} \boldsymbol A_{ij}\cos(\theta_i-\theta_j)\\
&=
\sum_{i\in S_l}\cos(\theta_i-\theta_c)
<0,
\end{aligned}
$$
which contradicts the positive semidefiniteness of $\boldsymbol{H}$.

\emph{Case 2.} $\mu_i=0$.
Then necessarily $|S_l|=2$.
In this case, the phasors of the two nodes in $S_l$ are opposite to $\boldsymbol{v}_c$.
By an argument analogous to the one above, the matrix $\boldsymbol{H}$ is not positive semidefinite.
\end{proof}

A set of graph-theoretic closed twins at a second-order stationary point of $E(\boldsymbol{\theta})$ is a special case of the following more general notion, which we call geometrically stable closed twins. We introduce this notion in order to prove that connected threshold graphs are globally synchronizing. Indeed, not all vertices that need to be compared in a threshold graph are closed twins in the usual graph-theoretic sense, so Lemma~\ref{cor:twins-sync} alone only proves the densest extreme case, namely complete graphs. The geometric formulation is needed to handle universal vertices that are not added consecutively in the construction sequence: isolated vertices may be inserted between them, so they need not be graph-theoretic closed twins in the final graph. Nevertheless, at a second-order stationary point they still satisfy the relations in the definition below, and therefore behave as geometrically stable closed twins, and they synchronize.

\begin{definition}[Geometrically closed stable twins]\label{def:geometric_twins}
Let $\boldsymbol{\theta}$ be a state of the Kuramoto model on $G$.
We say that two vertices $i$ and $j$ are \emph{geometrically closed twins}
at $\boldsymbol{\theta}$ if there exist a vector $\boldsymbol q\in\mathbb R^2$
and scalars $\mu_i,\mu_j\in\mathbb R$ such that
\[
    \boldsymbol v_j+\boldsymbol q=\mu_i \boldsymbol v_i,
    \qquad
    \boldsymbol v_i+\boldsymbol q=\mu_j \boldsymbol v_j .
\]
If, in addition, $\mu_i,\mu_j\geq 0$, then we call $i$ and $j$
\emph{geometrically stable closed twins}.
\end{definition}
As shown above, graph-theoretic closed twins at a second-order stationary point form a set of geometrically stable closed twins. Moreover, this phenomenon is not limited to closed twins in the structural sense: as we will show in the next section, there are other configurations in threshold graphs that also give rise
to geometrically stable closed twins.

\subsection{Synchronous Pendant Extension}\label{sec:sync-pendant}

In this section, we present Lemma~\ref{lemma:pendant}, which gives a mechanism by which a group of vertices can form geometrically stable closed twins in the sense of Definition~\ref{def:geometric_twins}, without being closed twins in the graph-theoretic sense. Although these vertices do not share the same closed neighborhood, the differences in their neighborhoods are pendant substructures attached only to the corresponding vertex and synchronized with it. We therefore refer to this result as the \emph{synchronous pendant extension}. Here \textit{pendant} means that they interact with the rest of the graph only through the vertex to which they are attached.
Before that, we establish Lemma~\ref{lem:sync-implies-common-alignment}, which serves as a preparatory step.

\begin{lemma}\label{lem:sync-implies-common-alignment}
Let $G=(V,E)$ be a graph, and let $W \subseteq V$ admit a partition
\[
W = Q \sqcup S \sqcup P,
\]
with the following structural conditions:

(1) Every node in $Q$ has all its neighbors inside $S$, 
    i.e., $N(i) \subseteq S$ for all $i \in Q$;

(2) For each $i \in S$, its neighborhood is 
    $P \sqcup N_Q(i) \sqcup (S \setminus \{i\})$, 
    where $$N_Q(i) = \{\, q \in Q : q \sim i \,\}.$$
Assume further that $\{\boldsymbol{v}_i\}_{i \in V}$ is the vector configuration corresponding to any second-order stationary point $\boldsymbol{\theta}$ of the Kuramoto energy \eqref{E}, and that the induced subgraph on $Q \sqcup S$ is synchronized at $\boldsymbol{\theta}$, namely
$\boldsymbol{v}_i = \boldsymbol{v}$ for all $i \in Q \sqcup S$, for some common phasor $\boldsymbol{v}$.
Then the phasor sum over $P$ satisfies
\[
\sum_{i \in P} \boldsymbol{v}_i \; \Uparrow \; \boldsymbol{v}
\qquad \text{or} \qquad
\sum_{i \in P} \boldsymbol{v}_i = \boldsymbol{0}.
\]
\end{lemma}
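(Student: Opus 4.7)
My plan is to extract collinearity from first-order stationarity at a single vertex of $S$, and then pin down the sign using a global ``rigid rotation'' perturbation of the cluster $Q \uplus S$.

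First, pick any $s \in S$ and apply the equilibrium condition of Lemma~\ref{eq-vec}. By condition (iii), $N(s) = (S \setminus \{s\}) \cup N_Q(s) \cup P$, and since every phasor in $Q \uplus S$ equals $\boldsymbol{v}$, the sum over the neighbors of $s$ splits as $\bigl(|S|-1 + |N_Q(s)|\bigr)\boldsymbol{v} + \sum_{p \in P} \boldsymbol{v}_p = \mu_s\, \boldsymbol{v}$. This immediately gives
\[
\sum_{p \in P} \boldsymbol{v}_p = \lambda\, \boldsymbol{v}, \qquad \lambda := \mu_s - |S| + 1 - |N_Q(s)| \in \mathbb{R},
\]
so $\sum_p \boldsymbol{v}_p$ is collinear with $\boldsymbol{v}$. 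The remaining work is to show $\lambda \geq 0$.

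For the sign, I would consider the linear perturbation $\boldsymbol{\theta}(\epsilon)$ that adds $\epsilon$ to $\theta_i$ for every $i \in Q \uplus S$ and leaves all other phases unchanged. Phase differences across edges internal to $Q \uplus S$ are invariant; by condition (i), no edge connects $Q$ to $V \setminus S$; and by condition (iii) the remaining varying edges form exactly the complete bipartite graph $S \times P$. A short trigonometric computation, followed by substituting $\sum_p \boldsymbol{v}_p = \lambda \boldsymbol{v}$ (so the $\boldsymbol{v}^{\perp}$-component vanishes), yields
\[
E\bigl(\boldsymbol{\theta}(\epsilon)\bigr) - E(\boldsymbol{\theta}) = \frac{|S|\lambda}{2}\,\bigl(1 - \cos\epsilon\bigr).
\]
Since $\boldsymbol{\theta}$ is a second-order stationary point, the right-hand side must be nonnegative for all small $\epsilon$, and as $1 - \cos\epsilon > 0$ for $\epsilon \neq 0$, this forces $\lambda \geq 0$. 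Therefore $\sum_p \boldsymbol{v}_p = \lambda \boldsymbol{v}$ is either $\boldsymbol{0}$ (when $\lambda = 0$) or positively collinear with $\boldsymbol{v}$ (when $\lambda > 0$), which is the claimed dichotomy.

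The conceptual hurdle, rather than any calculation, is choosing the correct test direction. The diagonal inequality $\mu_s \geq 0$ supplied by Lemma~\ref{stable-eq} alone only yields $\lambda \geq -(|S|-1+|N_Q(s)|)$, which is far too weak. What makes the rigid rotation of $Q \uplus S$ the right move is that it is precisely the Hessian direction that cancels the internal contributions of $Q \uplus S$ and isolates the $S \times P$ coupling, so the SOSP condition along this curve converts directly and cleanly into $\lambda \geq 0$.
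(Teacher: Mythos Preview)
Your proof is correct and is essentially the same argument as the paper's. The paper also extracts collinearity from the first-order condition at a vertex of $S$, and then rules out $\lambda<0$ via the second-order condition along the indicator direction of $Q\uplus S$: it computes the Hessian quadratic form $\boldsymbol{w}^\top(\nabla^2 E)\boldsymbol{w}$ with $\boldsymbol{w}=\mathbf{1}_{Q\uplus S}$ and shows it equals $|S|\,\boldsymbol{v}\cdot\sum_{p\in P}\boldsymbol{v}_p=|S|\lambda$. Your energy-along-the-curve computation is the integrated version of the same thing (indeed $\tfrac{d^2}{d\epsilon^2}\big|_{\epsilon=0}E(\boldsymbol{\theta}(\epsilon))=\boldsymbol{w}^\top(\nabla^2 E)\boldsymbol{w}$), so the two proofs coincide. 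One minor quibble: the phrasing ``must be nonnegative for all small $\epsilon$'' tacitly treats an SOSP as a local minimum; it would be cleaner to say that the second derivative of your curve at $\epsilon=0$ is $|S|\lambda$ (up to the constant factor), and SOSP forces this to be $\ge 0$.
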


\begin{proof}
	For any node $i\in S$, \begin{equation}\label{equa1}
		N(i) = N_Q(i) \sqcup (S\setminus\{i\}) \sqcup P 
	\end{equation} and \begin{equation}\label{equa2}\sum_{j\in N(i)} \boldsymbol{v}_j=\mu_i\boldsymbol{v}_i, \qquad \mu_i\geq 0.\end{equation}
	Plug \eqref{equa1} into \eqref{equa2}, we obtain $$\sum_{j\in N_Q(i) \sqcup(S\setminus \{i\})\sqcup P} \boldsymbol{v}_j=\mu_i\boldsymbol{v}_i, \qquad \mu_i\geq 0$$
	Since $Q \sqcup S$ forms a synchronized group, we have
    \begin{equation}\label{eq:stationarity-S}
        \bigl(|N_Q(i)| + |S| - 1\bigr)\,\boldsymbol{v}_i
    + \sum_{j \in P} \boldsymbol{v}_j
    = \mu_i \boldsymbol{v}_i, \qquad \mu_i \ge 0.
    \end{equation}
	Thus \[ \sum_{j\in P} \boldsymbol{v}_j
    = \bigl(\mu_i - (|N_Q(i)| + |S| - 1)\bigr)\boldsymbol{v}_i,
    \qquad \mu_i \ge 0. \]
    
In fact, $\mu_i - (|N_Q(i)| + |S| - 1) < 0$ does not hold, because in this case there exists a direction along which the 
energy decreases in a neighborhood of $\boldsymbol{\theta}$.
To see this, let $\boldsymbol{x}$ be the $n$-dimensional indicator vector 
of $Q \sqcup S$, i.e., $\boldsymbol x_i = 1$ for $i \in Q \sqcup S$ and $\boldsymbol x_i = 0$ otherwise.
Since $\mu_i-(|N_Q(i)|+|S|-1)<0$, we have $\langle \sum_{j\in P}\boldsymbol{v}_j,\boldsymbol{v}_i \rangle < 0$.
Thus \begin{equation}\label{eq:Hessian-quadratic-form}
\begin{aligned}
\boldsymbol{x}^T H \boldsymbol{x}
    &=\frac{1}{2}\sum_{i=1}^n\sum_{j=1}^n \boldsymbol{A}_{ij}\cos(\theta_i-\theta_j)(x_i-x_j)^2\\
    &= \sum_{i \in Q \sqcup S} \sum_{j \in V\setminus (Q \sqcup S)}\boldsymbol{A}_{ij}\cos(\theta_i - \theta_j)(x_i-x_j)^2 \\
    & = \sum_{i \in S} \sum_{j \in P} \cos(\theta_i - \theta_j)(x_i-x_j)^2 \\
    & = \sum_{i \in S} \sum_{j \in P} \cos(\theta_i - \theta_j)\\
    &= \sum_{i \in S} \sum_{j \in P} \boldsymbol{v}_i \cdot \boldsymbol{v}_j \\
    &= \sum_{i \in S} \boldsymbol{v}_i \cdot \sum_{j \in P} \boldsymbol{v}_j \\
    &= |S| \, \boldsymbol{v}_i \cdot \sum_{j \in P} \boldsymbol{v}_j .
\end{aligned}
\end{equation}
Clearly, if $\mu_i - (|N_Q(i)| + |S| - 1) < 0$, then \eqref{eq:stationarity-S} implies that
\[
\boldsymbol{v}_i \cdot \sum_{j\in P} \boldsymbol{v}_j < 0.
\]
Consequently, the quantity \eqref{eq:Hessian-quadratic-form} is negative, which contradicts the second-order stationarity of the state
$\boldsymbol{\theta}$.
\end{proof}

\begin{figure}[H]
    \centering
    \includegraphics[width=0.5\linewidth]{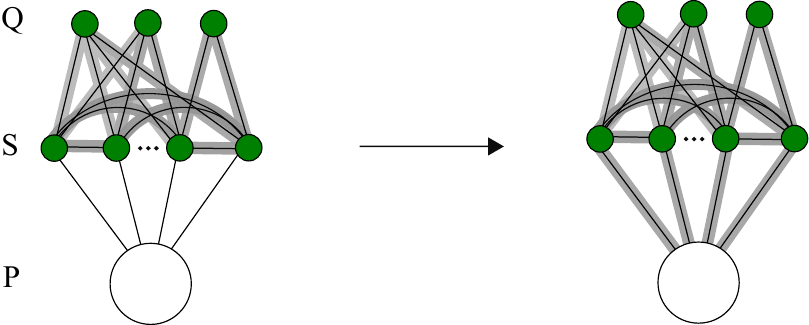}
    \caption{Illustration of Lemma~\ref{lem:sync-implies-common-alignment}. The green vertices correspond to nodes for which $\mu_i \boldsymbol{v}_i = \sum_{j \in N(i)} \boldsymbol{v}_j$ holds with $\mu_i \ge 0$. The large circular node represents the set $P$ of vertices. Gray-shaded edges between two nodes indicate that the vectors corresponding to these two nodes are synchronized. The structural relations among the sets $Q$, $S$, and $P$, together with the synchronization of $Q \sqcup S$, imply a synchronization in phase between $Q \sqcup S$ and the phasor sum over $P$.}
    \label{fig:false-twin-like-sum}
\end{figure}

\begin{lemma}\label{lemma:pendant}
Let $G=(V,E)$ be a graph, and let $W \subseteq V$ admit a partition
\[ W = Q \sqcup S_1 \sqcup S_2 \sqcup P\]
with the following structural conditions:

(i) Every node in $Q$ has all its neighbors inside $S_1$, 
    i.e., $N(i) \subseteq S_1$ for all $i \in Q$;

(ii) For each $i \in S_1 \sqcup S_2$, its neighborhood is 
    $P \sqcup N_Q(i) \sqcup (S \setminus \{i\})$, 
    where $$N_Q(i) = \{\, q \in Q : q \sim i \,\}.$$
    
(iii) For any \( i \in S_1 \sqcup S_2 \), its neighborhood can be decomposed into three parts: the set \(P\), its neighbours inside \(Q\), and the rest of the clique \(S_1 \sqcup S_2\).  
That is
\[
N(i) \;=\; P \sqcup N_Q(i) \sqcup \bigl((S_1 \sqcup S_2)\setminus \{i\}\bigr).
\]
Assume further that \( \{\boldsymbol{v}_i\}_{i\in V} \) corresponds to a second-order stationary point
\( \boldsymbol{\theta} \) of the Kuramoto energy~\eqref{E}, at which the induced subgraph on
\( Q \sqcup S_1 \) is synchronized, i.e.,
\[
\boldsymbol{v}_i = \boldsymbol{v}, \quad \forall i \in Q \sqcup S_1 ,
\]
for some common phasor \( \boldsymbol{v} \).
Then the set $S_1\sqcup S_2$ synchronize.
\end{lemma}

\begin{proof}
	\emph{Step A}. Observe that the set $S_1$ induces a clique, and $Q\sqcup S_1$ forms a synchronizing group. Hence by Lemma \ref{lem:sync-implies-common-alignment}, for every $i\in S_1$ there exists $\mu_i\ge 0$ such that
    \begin{equation}\label{parallel}
        \sum_{k\in S_2 \sqcup P} \boldsymbol v_k = \mu_i\,\boldsymbol v_i .
    \end{equation}

\emph{Step B}. Since $S_1$ is a synchronizing group, we have
\[
\sum_{k\in S_1\setminus\{i\}} \boldsymbol v_k
= (|S_1|-1)\boldsymbol v_i .
\] where $|S_1|-1\geq 0$.
Adding this term to~\eqref{parallel} yields
\[
\sum_{k\in S_2 \sqcup P \sqcup (S_1\setminus\{i\})} \boldsymbol v_k
= \bigl(\mu_i + |S_1|-1\bigr)\boldsymbol v_i .
\] where $\mu_i + |S_1|-1 > 0$.
In other words,
\begin{equation}\label{align1}
\forall i\in S_1,\qquad
\boldsymbol v_i \Uparrow
\sum_{k\in S_2 \sqcup P \sqcup (S_1\setminus\{i\})} \boldsymbol v_k .
\end{equation}

\emph{Step C}. According to Lemma \ref{stable-eq}, nodes in the set $S_2$ satisfy \begin{equation}\label{align2}
		\begin{aligned}
		\forall i\in S_2,\quad \boldsymbol{v}_i\Uparrow \sum_{k\in S_1\sqcup P \sqcup S_2 \setminus \{i\}}\boldsymbol{v}_k\quad\text{or}\quad \sum_{k\in S_1\sqcup P \sqcup S_2 \setminus \{i\}}\boldsymbol{v}_k=\boldsymbol{0}.
		\end{aligned}
	\end{equation} since $\boldsymbol{\theta}$ is a second-order stationary point.
	Combining \eqref{align1} and \eqref{align2}, we deduce that all nodes in $S_1 \sqcup S_2$ are geometrically stable twins in the sense of Definition~\ref{def:geometric_twins}. It then follows from Corollary~\ref{sync_twins} that $S_1 \sqcup S_2$ forms a synchronizing group.
\end{proof}

\begin{figure}[H]
    \centering
    \includegraphics[scale=0.38]{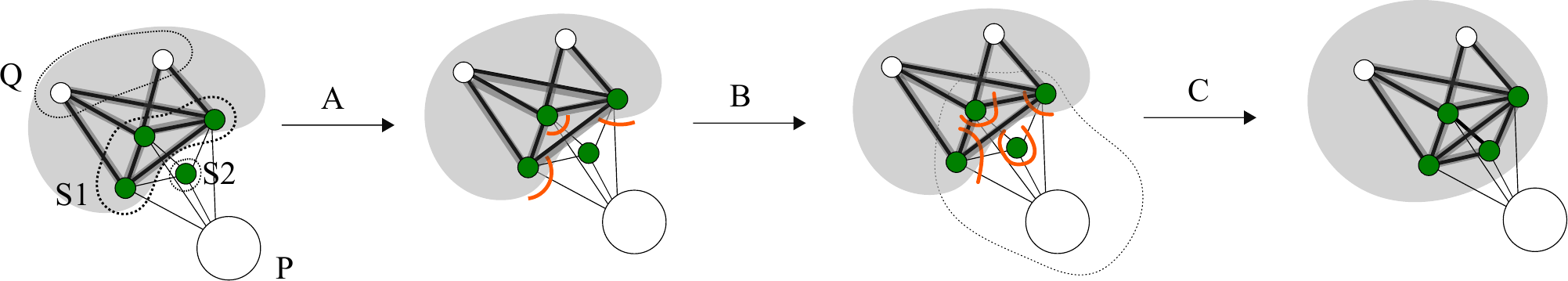}
    \label{fig:false-twin-like-sum}
    \caption{Illustration of the proof of Lemma~\ref{lemma:pendant}, illustrating how the synchronized block \( Q \sqcup S_1 \) extends to include \( S_2 \) through propagation enabled by the specific local structure. The green vertices correspond to nodes for which $\mu_i \boldsymbol{v}_i = \sum_{j \in N(i)} \boldsymbol{v}_j$ holds with $\mu_i \ge 0$. The gray-shaded nodes are synchronized; an edge with a gray background indicates synchronization between the corresponding nodes. Each orange arc separates a small green node from a group of nodes it connects to, indicating that the vector of the green node is in the same direction as the sum of the vectors of the nodes on the opposite side of the arc.}
\end{figure}
\section{Main Proof: the Induction Argument}\label{main_pf}
\subsection{A Prototype Example}
Let us take a look at the threshold graph corresponding to the sequence \texttt{01010101}. The first vertex is denoted by $A$, and we label the vertices $A$ through $I$ in the order in which they appear in the sequence.

Let $G$ be the threshold graph corresponding to the sequence \texttt{01010101}. 
To prove that $G$ is globally synchronizing, we show that every second-order stationary point $\boldsymbol{\theta}$ of $E_G(\boldsymbol{\theta})$ is synchronous; that is, 
$\theta_1 = \cdots = \theta_n$.

\begin{figure}[h]
\centering
\begin{adjustbox}{max width=1\textwidth}
\begin{tabular}{cccc}

\begin{subfigure}{0.3\textwidth}
\centering
\begin{tikzpicture}[scale=1.35,
    node/.style={circle, fill=green!45!black, inner sep=2.4pt},
    lbl/.style={font=\small},
    edge/.style={thin} ]

\foreach \i/\name in {0/A,1/C,2/E,3/G,4/I,5/H,6/F,7/D,8/B}{
    \pgfmathsetmacro{\ang}{90 - 40*\i}
    \node[node] (\name) at (\ang:1) {};
    \node[lbl] at (\ang:1.2) {\name};
}

\begin{pgfonlayer}{background}
    \draw[line width=6pt, gray!40, line cap=round] (I) -- (H);
\end{pgfonlayer}

\foreach \u/\v in {
A/C,A/E,A/G,A/I,
C/E,C/G,C/I,
E/G,E/I,G/I,
B/C,B/E,B/G,B/I,
D/E,D/G,D/I,
F/G,F/I,
H/I}{\draw[edge] (\u)--(\v);}

\end{tikzpicture}
\caption*{(1)}
\end{subfigure}
&
\begin{subfigure}{0.3\textwidth}
\centering
\begin{tikzpicture}[scale=1.35,
    node/.style={circle, fill=green!45!black, inner sep=2.4pt},
    lbl/.style={font=\small},
    edge/.style={thin} ]

\foreach \i/\name in {0/A,1/C,2/E,3/G,4/I,5/H,6/F,7/D,8/B}{
    \pgfmathsetmacro{\ang}{90 - 40*\i}
    \node[node] (\name) at (\ang:1) {};
    \node[lbl] at (\ang:1.2) {\name};
}

\begin{pgfonlayer}{background}
    \draw[line width=6pt, gray!40, line cap=round] (G)--(I)--(H);
\end{pgfonlayer}

\foreach \u/\v in {
A/C,A/E,A/G,A/I,
C/E,C/G,C/I,
E/G,E/I,G/I,
B/C,B/E,B/G,B/I,
D/E,D/G,D/I,
F/G,F/I,
H/I}{\draw[edge] (\u)--(\v);}

\end{tikzpicture}
\caption*{(2)}
\end{subfigure}
&
\begin{subfigure}{0.3\textwidth}
\centering
\begin{tikzpicture}[scale=1.35,
    node/.style={circle, fill=green!45!black, inner sep=2.4pt},
    lbl/.style={font=\small},
    edge/.style={thin} ]

\foreach \i/\name in {0/A,1/C,2/E,3/G,4/I,5/H,6/F,7/D,8/B}{
    \pgfmathsetmacro{\ang}{90 - 40*\i}
    \node[node] (\name) at (\ang:1) {};
    \node[lbl] at (\ang:1.2) {\name};
}

\begin{pgfonlayer}{background}
    \draw[line width=6pt, gray!40, line cap=round] (G)--(I)--(H);
    \draw[line width=6pt, gray!40, line cap=round] (G)--(F);
    \draw[line width=6pt, gray!40, line cap=round] (I)--(F);
\end{pgfonlayer}

\foreach \u/\v in {
A/C,A/E,A/G,A/I,
C/E,C/G,C/I,
E/G,E/I,G/I,
B/C,B/E,B/G,B/I,
D/E,D/G,D/I,
F/G,F/I,
H/I}{\draw[edge] (\u)--(\v);}

\end{tikzpicture}
\caption*{(3)}
\end{subfigure}
&
\begin{subfigure}{0.3\textwidth}
\centering
\begin{tikzpicture}[scale=1.35,
    node/.style={circle, fill=green!45!black, inner sep=2.4pt},
    lbl/.style={font=\small},
    edge/.style={thin} ]

\foreach \i/\name in {0/A,1/C,2/E,3/G,4/I,5/H,6/F,7/D,8/B}{
    \pgfmathsetmacro{\ang}{90 - 40*\i}
    \node[node] (\name) at (\ang:1) {};
    \node[lbl] at (\ang:1.2) {\name};
}

\begin{pgfonlayer}{background}
    \draw[line width=6pt, gray!40, line cap=round] (G)--(I)--(H);
    \draw[line width=6pt, gray!40, line cap=round] (G)--(F);
    \draw[line width=6pt, gray!40, line cap=round] (I)--(F);
    \draw[line width=6pt, gray!40, line cap=round] (E)--(G);
    \draw[line width=6pt, gray!40, line cap=round] (E)--(I);
\end{pgfonlayer}

\foreach \u/\v in {
A/C,A/E,A/G,A/I,
C/E,C/G,C/I,
E/G,E/I,G/I,
B/C,B/E,B/G,B/I,
D/E,D/G,D/I,
F/G,F/I,
H/I}{\draw[edge] (\u)--(\v);}

\end{tikzpicture}
\caption*{(4)}
\end{subfigure}
\\[1em]
\begin{subfigure}{0.3\textwidth}
\centering
\begin{tikzpicture}[scale=1.35,
    node/.style={circle, fill=green!45!black, inner sep=2.4pt},
    lbl/.style={font=\small},
    edge/.style={thin} ]

\foreach \i/\name in {0/A,1/C,2/E,3/G,4/I,5/H,6/F,7/D,8/B}{
    \pgfmathsetmacro{\ang}{90 - 40*\i}
    \node[node] (\name) at (\ang:1) {};
    \node[lbl] at (\ang:1.2) {\name};
}

\begin{pgfonlayer}{background}
    \draw[line width=6pt, gray!40, line cap=round] (G)--(I)--(H);
    \draw[line width=6pt, gray!40, line cap=round] (G)--(F);
    \draw[line width=6pt, gray!40, line cap=round] (I)--(F);
    \draw[line width=6pt, gray!40, line cap=round] (E)--(G);
    \draw[line width=6pt, gray!40, line cap=round] (E)--(I);
    \draw[line width=6pt, gray!40, line cap=round] (D)--(E);
    \draw[line width=6pt, gray!40, line cap=round] (D)--(G);
    \draw[line width=6pt, gray!40, line cap=round] (D)--(I);
\end{pgfonlayer}

\foreach \u/\v in {
A/C,A/E,A/G,A/I,
C/E,C/G,C/I,
E/G,E/I,G/I,
B/C,B/E,B/G,B/I,
D/E,D/G,D/I,
F/G,F/I,
H/I}{\draw[edge] (\u)--(\v);}

\end{tikzpicture}
\caption*{(5)}
\end{subfigure}
&
\begin{subfigure}{0.3\textwidth}
\centering
\begin{tikzpicture}[scale=1.35,
    node/.style={circle, fill=green!45!black, inner sep=2.4pt},
    lbl/.style={font=\small},
    edge/.style={thin} ]

\foreach \i/\name in {0/A,1/C,2/E,3/G,4/I,5/H,6/F,7/D,8/B}{
    \pgfmathsetmacro{\ang}{90 - 40*\i}
    \node[node] (\name) at (\ang:1) {};
    \node[lbl] at (\ang:1.2) {\name};
}

\begin{pgfonlayer}{background}
    \draw[line width=6pt, gray!40, line cap=round] (G)--(I)--(H);
    \draw[line width=6pt, gray!40, line cap=round] (G)--(F);
    \draw[line width=6pt, gray!40, line cap=round] (I)--(F);
    \draw[line width=6pt, gray!40, line cap=round] (E)--(G);
    \draw[line width=6pt, gray!40, line cap=round] (E)--(I);
    \draw[line width=6pt, gray!40, line cap=round] (D)--(E);
    \draw[line width=6pt, gray!40, line cap=round] (D)--(G);
    \draw[line width=6pt, gray!40, line cap=round] (D)--(I);
    \draw[line width=6pt, gray!40, line cap=round] (C)--(E);
    \draw[line width=6pt, gray!40, line cap=round] (C)--(G);
    \draw[line width=6pt, gray!40, line cap=round] (C)--(I);
\end{pgfonlayer}

\foreach \u/\v in {
A/C,A/E,A/G,A/I,
C/E,C/G,C/I,
E/G,E/I,G/I,
B/C,B/E,B/G,B/I,
D/E,D/G,D/I,
F/G,F/I,
H/I}{\draw[edge] (\u)--(\v);}

\end{tikzpicture}
\caption*{(6)}
\end{subfigure}
&
\begin{subfigure}{0.3\textwidth}
\centering
\begin{tikzpicture}[scale=1.35,
    node/.style={circle, fill=green!45!black, inner sep=2.4pt},
    lbl/.style={font=\small},
    edge/.style={thin} ]

\foreach \i/\name in {0/A,1/C,2/E,3/G,4/I,5/H,6/F,7/D,8/B}{
    \pgfmathsetmacro{\ang}{90 - 40*\i}
    \node[node] (\name) at (\ang:1) {};
    \node[lbl] at (\ang:1.2) {\name};
}

\begin{pgfonlayer}{background}
    \draw[line width=6pt, gray!40, line cap=round] (G)--(I)--(H);
    \draw[line width=6pt, gray!40, line cap=round] (G)--(F);
    \draw[line width=6pt, gray!40, line cap=round] (I)--(F);
    \draw[line width=6pt, gray!40, line cap=round] (E)--(G);
    \draw[line width=6pt, gray!40, line cap=round] (E)--(I);
    \draw[line width=6pt, gray!40, line cap=round] (D)--(E);
    \draw[line width=6pt, gray!40, line cap=round] (D)--(G);
    \draw[line width=6pt, gray!40, line cap=round] (D)--(I);
    \draw[line width=6pt, gray!40, line cap=round] (C)--(E);
    \draw[line width=6pt, gray!40, line cap=round] (C)--(G);
    \draw[line width=6pt, gray!40, line cap=round] (C)--(I);
    \draw[line width=6pt, gray!40, line cap=round] (B)--(C);
    \draw[line width=6pt, gray!40, line cap=round] (B)--(E);
    \draw[line width=6pt, gray!40, line cap=round] (B)--(G);
    \draw[line width=6pt, gray!40, line cap=round] (B)--(I);
\end{pgfonlayer}

\foreach \u/\v in {
A/C,A/E,A/G,A/I,
C/E,C/G,C/I,
E/G,E/I,G/I,
B/C,B/E,B/G,B/I,
D/E,D/G,D/I,
F/G,F/I,
H/I}{\draw[edge] (\u)--(\v);}

\end{tikzpicture}
\caption*{(7)}
\end{subfigure}
&
\begin{subfigure}{0.3\textwidth}
\centering
\begin{tikzpicture}[scale=1.35,
    node/.style={circle, fill=green!45!black, inner sep=2.4pt},
    lbl/.style={font=\small},
    edge/.style={thin} ]
\foreach \i/\name in {0/A,1/C,2/E,3/G,4/I,5/H,6/F,7/D,8/B}{
    \pgfmathsetmacro{\ang}{90 - 40*\i}
    \node[node] (\name) at (\ang:1) {};
    \node[lbl] at (\ang:1.2) {\name};}
\begin{pgfonlayer}{background}
    \draw[line width=6pt, gray!40, line cap=round] (G)--(I)--(H);
    \draw[line width=6pt, gray!40, line cap=round] (G)--(F);
    \draw[line width=6pt, gray!40, line cap=round] (I)--(F);
    \draw[line width=6pt, gray!40, line cap=round] (E)--(G);
    \draw[line width=6pt, gray!40, line cap=round] (E)--(I);
    \draw[line width=6pt, gray!40, line cap=round] (D)--(E);
    \draw[line width=6pt, gray!40, line cap=round] (D)--(G);
    \draw[line width=6pt, gray!40, line cap=round] (D)--(I);
    \draw[line width=6pt, gray!40, line cap=round] (C)--(E);
    \draw[line width=6pt, gray!40, line cap=round] (C)--(G);
    \draw[line width=6pt, gray!40, line cap=round] (C)--(I);
    \draw[line width=6pt, gray!40, line cap=round] (B)--(C);
    \draw[line width=6pt, gray!40, line cap=round] (B)--(E);
    \draw[line width=6pt, gray!40, line cap=round] (B)--(G);
    \draw[line width=6pt, gray!40, line cap=round] (B)--(I);
    \draw[line width=6pt, gray!40, line cap=round] (A)--(C);
    \draw[line width=6pt, gray!40, line cap=round] (A)--(E);
    \draw[line width=6pt, gray!40, line cap=round] (A)--(G);
    \draw[line width=6pt, gray!40, line cap=round] (A)--(I);
\end{pgfonlayer}
\foreach \u/\v in {
A/C,A/E,A/G,A/I,
C/E,C/G,C/I,
E/G,E/I,G/I,
B/C,B/E,B/G,B/I,
D/E,D/G,D/I,
F/G,F/I,
H/I}{\draw[edge] (\u)--(\v);}
\end{tikzpicture}
\caption*{(8)}
\end{subfigure}
\end{tabular}
\end{adjustbox}
\caption{Synchronization propagates in eight steps, forcing every second-order stationary point to be synchronous. Gray-shaded edges represent synchronizing relations that have been established at each step.}
\end{figure}
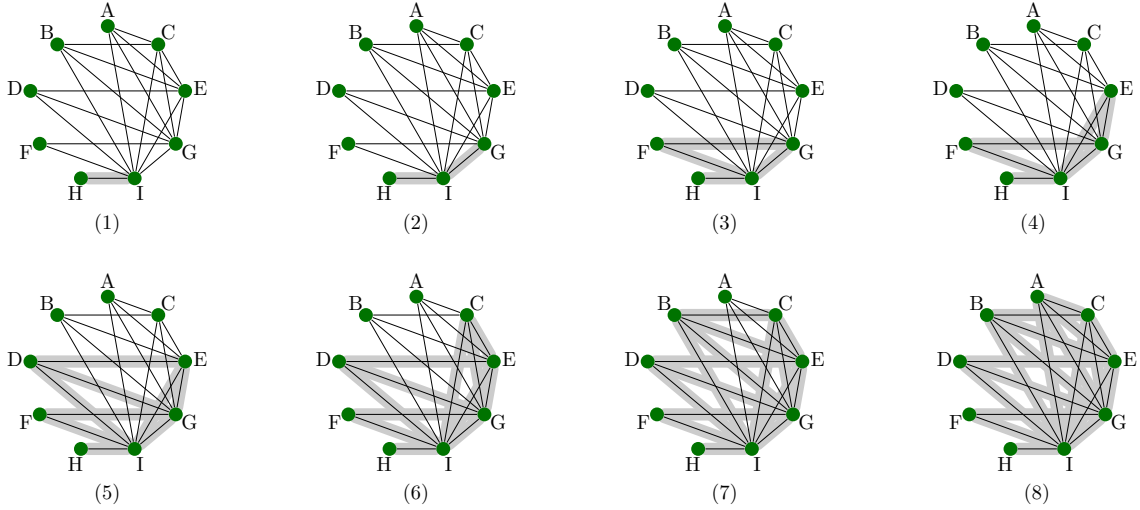

\textit{Step (1)} We begin with the edge between $H$ and $I$. 
Applying Lemma~\ref{stable-eq} to node $H$ shows that, since $\boldsymbol{\theta}$ is a second-order 
stationary point, we must have $\boldsymbol{v}_H \Uparrow \boldsymbol{v}_I$. 
Equivalently, $$\theta_H=\theta_I.$$

\textit{Step (2)} Observe that $N(G)=N(I)\cup \{H\}$ and $\theta_H=\theta_I$, applying Lemma \ref{lemma:pendant} by letting $Q=\{H\}$, $S_1=\{I\}$, $S_2=\{G\}$, $P=\{F,D,B,A,C,E\}$, we obtain $$\theta_G=\theta_I.$$ Intuitively, the only difference between the neighborhoods of $G$ and $I$  is that $I$ has one additional neighbor, namely $H$.  This extra neighbor does not obstruct the synchronization between $G$ and $I$,  because $H$ is already synchronized with $I$ and therefore behaves as a synchronous pendant at this stage.

\textit{Step (3)} Note that node $F$ connects only to $G$ and $I$, which are synchronized as established in Step~(2). 
Therefore, by Lemma~\ref{stable-eq}, we have 
$$\theta_F = \theta_G = \theta_I.$$

\textit{Step (4)}
Applying Lemma~\ref{lemma:pendant} with 
$Q=\{F,H\}$, $S_1=\{G,I\}$, $S_2=\{E\}$, and $P=\{D,B,A,C\}$, 
we conclude $$\theta_E=\theta_G=\theta_I.$$

\textit{Step (5)} Note that node $D$ connects to $E$, $G$, and $I$, which are synchronized as established in Step~(4). 
Therefore, by Lemma~\ref{stable-eq}, we have 
$$\theta_D = \theta_E = \theta_G = \theta_I.$$

\textit{Step (6)} 
Applying Lemma~\ref{lemma:pendant} with 
$Q=\{D,F,H\}$, $S_1=\{E,G,I\}$, $S_2=\{C\}$, and $P=\{A,B\}$,
we conclude $$\theta_C=\theta_E=\theta_G=\theta_I.$$

\textit{Step (7)} Note that node $B$ connects to $C$, $E$, $G$, and $I$, which are synchronized as established in Step~(6). 
Therefore, by Lemma~\ref{stable-eq}, we have 
$$\theta_B = \theta_C = \theta_E = \theta_G = \theta_I.$$

\textit{Step (8)} As in Step~(7), Lemma~\ref{stable-eq} implies that $$\theta_A=\theta_C=\theta_E=\theta_G=\theta_I$$ since these nodes form a synchronizing group and constitute all neighbors of $A$.

In conclusion, every second-order stationary point $\boldsymbol{\theta}$ is a fully synchronized state. Hence the threshold graph corresponding to \texttt{01010101} is second-order globally synchronizing. This implies that it is also globally synchronizing.

\subsection{Proof of Theorem \ref{main}: General Case}
\begin{proof}[Proof of Theorem~\ref{main}]
Any connected threshold graph can be described by a bit sequence of $1$'s and $0$'s, where the last bit must be a $1$; otherwise, the threshold graph is not connected.
For simplicity, we decompose the sequence into maximal contiguous blocks of $1$'s and $0$'s:
\[
U_1,\, I_2,\, U_2,\, \dots,\, I_{k-1},\, U_{k-1},\, I_k,\, U_k,
\]
or
\[
I_1,\, U_1,\, I_2,\, U_2,\, \dots,\, I_{k-1},\, U_{k-1},\, I_k,\, U_k,
\]
where $U_i$ denotes a subsequence of $1$'s and $I_i$ denotes a subsequence of $0$'s.
Without loss of generality, we focus on threshold graphs $G=(V,E)$ of the first sequence type; the second case follows by an entirely analogous argument.

Let $\boldsymbol{\theta}$ be a second-order stationary point of the energy function $E_G(\boldsymbol{\theta})$ of the homogeneous Kuramoto model on $G$.
We prove that $\boldsymbol{\theta}$ is a fully synchronized state by induction on $m$, where $m$ denotes the number of $1$-blocks counted from the end of the threshold sequence.

For the base case, observe that all nodes within a block of $1$'s form a set of closed twins.
Hence, at any second-order stationary point $\boldsymbol{\theta}$, the nodes in the last block $U_k$ must synchronize, as follows directly from Corollary~\ref{cor:twins-sync}.

Assume as the inductive hypothesis that the nodes in the last $m$ blocks of $1$'s, namely $U_{k-m+1}, \ldots, U_k$, are synchronized, i.e., there exists a common phasor $\boldsymbol{v}$ such that $\boldsymbol{v}_i = \boldsymbol{v}$ for all $i \in U_{k-m+1} \sqcup \cdots \sqcup U_k$. Lemma~\ref{stable-eq} then implies that the nodes in the adjacent $0$-blocks $I_{k-m+1} \sqcup \cdots \sqcup I_k$ also satisfy $\boldsymbol{v}_i = \boldsymbol{v}$.

Define
\[
\begin{aligned}
Q   &:= I_{k-m+1} \sqcup \cdots \sqcup I_k, \\
S_1 &:= U_{k-m+1} \sqcup \cdots \sqcup U_k, \\
S_2 &:= U_{k-m}, \\
P   &:= U_1 \sqcup \cdots \sqcup U_{k-m-1}
         \sqcup I_2 \sqcup \cdots \sqcup I_{k-m}.
\end{aligned}
\]
We now verify that the assumptions of Lemma~\ref{lemma:pendant} are satisfied by the above
decomposition, which follows directly from the threshold construction.

By construction, the vertices in $Q$ correspond to the most recent $0$-blocks
preceding $S_1$. Hence, every vertex in $Q$ is adjacent only to vertices in $S_1$.
Moreover, since $S_1$ and $S_2$ consist of consecutive dominating blocks,
the induced subgraph on $S_1 \cup S_2$ is a clique.
Finally, for each $i\in S_1$, its neighborhood can be decomposed as
\[
N(i) = P \;\sqcup\; \bigl((S_1\cup S_2)\setminus\{i\}\bigr) \;\sqcup\; N_Q(i),
\]
where $P$ denotes the set of vertices added earlier in the construction.
Hence, all the assumptions of Lemma~\ref{lemma:pendant} are satisfied,
and we conclude that the sets $S_1$ and $S_2$ synchronize.
In other words, the last $m+1$ blocks of $1$'s $U_{k-m}, U_{k-m+1}, \cdots, U_k$ synchronize.

Therefore, every second-order stationary point is a synchronous state, and it follows that $G$ is second-order globally synchronizing, and in particular, it is globally synchronizing.
\end{proof}

\section{Summary and Open Problems}\label{summary} In this work, we identify threshold graphs as a class of interaction networks that are globally synchronizing, despite spanning the full range of admissible edge densities from $2/n$ to $1$, and the full range of admissible minimum degree from $1$ to $n-1$.
Our analysis crucially exploits the constructive nature of threshold graphs: they form the smallest graph class closed under the successive addition of isolated and universal nodes. This closure property enables an inductive approach, whereby global synchronization is shown to be preserved when starting from a smaller synchronizing threshold graph and adding either an isolated node or a universal node. From a geometric perspective, this reveals a local-to-global propagation mechanism, whereby synchronization constraints imposed on a small subset of nodes progressively force synchrony across the entire network.

A natural question is whether this closure phenomenon extends beyond
threshold graphs. Specifically, suppose that $G$ is globally synchronizing.
Is the graph obtained from $G$ by adding a universal vertex again globally
synchronizing? We expect an affirmative answer. It seems that the added universal vertex
couples uniformly to all existing vertices, and it is therefore plausible that
any stable configuration of the enlarged graph may inherit the synchronized
structure of $G$. We leave this question for future work.

Moreover, we expect that our results may admit further generalizations. 
Possible directions include extending the analysis to oscillators evolving on higher-dimensional spheres, incorporating nonlinear interaction terms as in~\cite{Geshkovski2025} and considering broader classes of graphs. For instance, one may study systems in which oscillators are coupled through both positive and negative edges.

\bibliographystyle{alpha}
\bibliography{ref}
\addtocontents{toc}{\protect\setcounter{tocdepth}{1}}%
\setcounter{tocdepth}{1}%
\vspace*{1cm}
\end{document}